\newtheorem{theorem}{Theorem}
\newtheorem{remark}[theorem]{Remark}
\newtheorem{proposition}[theorem]{Proposition}
\newtheorem{definition}[theorem]{Definition}
\DeclareMathOperator*{\divergenz}{div}              %
\DeclareMathOperator*{\ints}{int}         %
\DeclareMathOperator*{\Ss}{S}
\newcommand{\N}{\mathbb{N}}
\newcommand{\R}{\mathbb{R}}
\newcommand{\RN}{\mathbb{R}^N}
\newcommand*\diff{\mathop{}\!\mathrm{d}}
\newcommand{\Lp}[1]{L^{#1}(\Omega)}
\newcommand{\Lprand}[1]{L^{#1}(\partial\Omega)}
\newcommand{\Wp}[1]{W^{1,#1}(\Omega)}
\newcommand{\lan}{\langle}
\newcommand{\ran}{\rangle}
\newcommand{\eps}{\varepsilon}
\newcommand{\ph}{\varphi}
\newcommand{\Om}{\Omega}
\newcommand{\rand}{\partial\Omega}
\newcommand{\into}{\int_{\Omega}}
\newcommand{\intor}{\int_{\partial\Omega}}
\newcommand{\weak}{\rightharpoonup}
\newcommand{\Linf}{L^{\infty}(\Omega)}
\newcommand{\close}{\overline{\Omega}}
\newcommand{\interior}{\ints \left(C^1(\overline{\Omega})_+\right)}
\renewcommand{\l}{\left}
\renewcommand{\r}{\right}
\newcommand{\WH}{W^{1, \mathcal{H}}(\Omega)}
\numberwithin{theorem}{section}
\numberwithin{equation}{section}
\title[Existence results for double phase problems ]{Existence results for double phase problems depending on Robin and Steklov eigenvalues for the $p$-Laplacian}
\author[S. El Manouni]{Said El Manouni}
\address[S. El Manouni]{Imam Mohammad Ibn Saud Islamic University (IMSIU), Faculty of Sciences, Department of Mathematics  P. O. Box 90950, Riyadh 11623, Saudi Arabia}
\email{samanouni@imamu.edu.sa \& manouni@hotmail.com}
\author[G. Marino]{Greta Marino}
\address[G. Marino]{Technische Universit\"{a}t Chemnitz, Fakult\"{a}t f\"{u}r Mathematik, Reichenhainer Stra\ss e 41, 09126 Chemnitz, Germany}
\email{greta.marino@mathematik.tu-chemnitz.de}
\author[P. Winkert]{Patrick Winkert$^{*}$}
\address[P. Winkert]{Technische Universit\"{a}t Berlin, Institut f\"{u}r Mathematik, Stra\ss e des 17.~Juni 136, 10623 Berlin, Germany}
\email{winkert@math.tu-berlin.de}
\thanks{$^{*}$Corresponding author}
\subjclass{35J15, 35J62, 35J92, 35P30}
\keywords{Convection term, double phase operator, multiplicity results, nonlinear boundary condition, Robin eigenvalue problem, Steklov eigenvalue problem}
\begin{document}

\begin{abstract}
	In this paper we study double phase problems with nonlinear boundary condition and gradient dependence. Under quite general assumptions we prove existence results for such problems where the perturbations satisfy a suitable behavior in the origin and at infinity. Our proofs make use of variational tools, truncation techniques and comparison methods. The obtained solutions depend on the first eigenvalues of the Robin and Steklov eigenvalue problems for the $p$-Laplacian.
\end{abstract}

\maketitle

%***********************************************************************************************************************************
\section{Introduction}
%***********************************************************************************************************************************

Let $\Omega\subset \R^N$, $N>1$, be a bounded domain with Lipschitz boundary $\partial\Omega$. We consider the following double phase problem with nonlinear boundary condition and convection term given by
\begin{equation}\label{problem}
	\begin{aligned}
		-\divergenz\l(|\nabla u|^{p-2} \nabla u+ \mu(x) |\nabla u|^{q-2} \nabla u\r)&= h_1(x, u, \nabla u)\quad && \text{in } \Omega, \\
		\l(|\nabla u|^{p-2} \nabla u+ \mu(x) |\nabla u|^{q-2} \nabla u\r) \cdot \nu&= h_2(x, u) && \text{on } \partial\Omega,
	\end{aligned}
\end{equation}
where $\nu(x)$ is the outer unit normal of $\Omega$ at the point $x \in \partial\Omega$, $1< p< q<N$, $0\leq \mu(\cdot) \in \Lp{1}$ and $h_1\colon \Omega \times \R \times \R^N \to \R$ as well as $h_2\colon \partial\Omega \times \R \to \R$ are Carath\'eodory functions which satisfy suitable structure conditions and behaviors near the origin and at infinity, see Sections \ref{section_3} and \ref{section_4} for the precise assumptions. 

The differential operator that appears in \eqref{problem} is the so-called double phase operator which is defined by
\begin{align}\label{operator_double_phase}
	-\divergenz \l(|\nabla u|^{p-2} \nabla u+ \mu(x) |\nabla u|^{q-2} \nabla u\r)\quad \text{for }u\in \Wp{\mathcal{H}}
\end{align}
with an appropriate Musielak-Orlicz Sobolev space $\Wp{\mathcal{H}}$, see its definition in Section \ref{section_2}. Note that when $\inf_{\close} \mu>0$ or $\mu\equiv 0$ then the operator becomes the weighted $(q,p)$-Laplacian or the $p$-Laplacian, respectively. The energy functional $J\colon\WH\to \R$ related to the double phase operator \eqref{operator_double_phase} is given by
\begin{align}\label{integral_minimizer}
	J(u)=\int_\Omega \left(|\nabla  u|^p+\mu(x)|\nabla  u|^q\right)\diff x,
\end{align}
where the integrand has unbalanced growth. The main characteristic of the functional $J$ is the change of ellipticity on the set where the weight function is zero, that is, on the set $\{x\in \Omega: \mu(x)=0\}$. Precisely, the energy density of $J$ exhibits ellipticity in the gradient of order $q$ on the points $x$ where $\mu(x)$ is positive and of order $p$ on the points $x$ where $\mu(x)$ vanishes. 

The first who introduced and studied functionals whose integrands change their ellipticity according to a point was Zhikov \cite{Zhikov-1986} (see also the monograph of Zhikov-Kozlov-Oleinik \cite{Zhikov-Kozlov-Oleinik-1994}) in order to provide models for strongly anisotropic materials. Functionals stated in \eqref{integral_minimizer} have been intensively studied in the past decade concerning regularity for isotropic and anisotropic functionals. We mention the papers of Baroni-Colombo-Mingione \cite{Baroni-Colombo-Mingione-2015,Baroni-Colombo-Mingione-2016,Baroni-Colombo-Mingione-2018}, Baroni-Kuusi-Mingione \cite{Baroni-Kuusi-Mingione-2015}, Byun-Oh \cite{Byun-Oh-2020}, Colombo-Mingione \cite{Colombo-Mingione-2015a,Colombo-Mingione-2015b},
Marcellini \cite{Marcellini-1989,Marcellini-1991}, Ok \cite{Ok-2018,Ok-2020}, Ragusa-Tachikawa \cite{Ragusa-Tachikawa-2020} and the references therein.

In this paper we are going to study problem \eqref{problem} concerning multiplicity of solutions. In the first part of the paper, see Section \ref{section_3}, we prove the existence of a nontrivial weak solution when the function $h_1$ depends on the gradient of the solution. Hence, no variational tools like critical point theory are available. We will make use of the surjectivity result for pseudomonotone operators where in the proof the first eigenvalues of the Robin and Steklov eigenvalue problems for the $p$-Laplacian play an important role. In the second part of the paper we will skip the gradient dependence and prove the existence of two constant sign solutions, one is nonnegative and the other one is nonpositive. Here, we need some stronger conditions on the nonlinearities, for example superlinearity at $\pm\infty$. Again, the solutions depend on the first Robin and Steklov eigenvalues, respectively. We will see that the Steklov eigenvalue problem is the more natural one for problems with nonlinear boundary condition than the Robin eigenvalue problem. 

There are only few works dealing with double phase operators along with a nonlinear boundary condition. Papageorgiou-Vetro-Vetro \cite{Papageorgiou-Vetro-Vetro-2020} studied the Robin problem 
\begin{equation}\label{problem5}
	\begin{aligned}
		-\divergenz\l(a(z)|\nabla u|^{p-2} \nabla u)\r)-\Delta_qu +\xi(z)|u|^{p-2}u&= \lambda f(z,u(z)) \quad && \text{in } \Omega, \\
		\frac{\partial u}{\partial n_\theta} +\beta |u|^{p-2}u&= 0 && \text{on } \partial\Omega,
	\end{aligned}
\end{equation}
where $1<q<p<N$, $\xi\in \Linf$ is a positive potential, $a(z)>0$ for a.\,a.\,$z\in\Omega$ and
\begin{align*}
	\frac{\partial u}{\partial n_\theta}=[a(z)|\nabla u|^{p-2}+|\nabla u|^{q-2}]\frac{\partial u}{\partial n}
\end{align*} 
with $n(\cdot)$ being the outward unit normal on $\partial\Omega$.
Under different assumptions it is shown that problem \eqref{problem5} admits two nontrivial solutions $u_\lambda, \hat{u}_\lambda \in \WH$ for small $\lambda>0$ such that $\|u_\lambda\|_{1,\mathcal{H}}\to +\infty$ and $\|\hat{u}_\lambda\|_{1,\mathcal{H}}\to 0$ as $\lambda \to 0^+$. In Papageorgiou-R\u{a}dulescu-Repov\v{s} \cite{Papageorgiou-Radulescu-Repovs-2020d} the authors proved the existence of multiple solutions in the superlinear and the resonant case for the problem
\begin{equation*}%\label{problem6}
\begin{aligned}
-\divergenz\l(a_0(z)|\nabla u|^{p-2} \nabla u)\r)-\Delta_qu +\xi(z)|u|^{p-2}u&= f(z,u(z)) \quad && \text{in } \Omega, \\
\frac{\partial u}{\partial n_\theta} +\beta |u|^{p-2}u&= 0 && \text{on } \partial\Omega,
\end{aligned}
\end{equation*}
where $1<q<p\leq N$ and with a positive Lipschitz function $a_0(\cdot)$. Note that our assumptions and our treatment differ from the ones in \cite{Papageorgiou-Radulescu-Repovs-2020d} and \cite{Papageorgiou-Vetro-Vetro-2020}. Also, we allow that the weight function could be zero at some points. Recently, Gasi\'nski-Winkert \cite{Gasinski-Winkert-2021} considered the problem
\begin{equation}\label{problem7}
	\begin{aligned}
		-\divergenz\left(|\nabla u|^{p-2}\nabla u+\mu(x) |\nabla u|^{q-2}\nabla u\right) & =f(x,u)-|u|^{p-2}u-\mu(x)|u|^{q-2}u && \text{in } \Omega,\\
		\left(|\nabla u|^{p-2}\nabla u+\mu(x) |\nabla u|^{q-2}\nabla u\right) \cdot \nu & = g(x,u) &&\text{on } \partial \Omega.
	\end{aligned}
\end{equation} 
Based on the Nehari manifold method it is shown that problem \eqref{problem7} has at least three nontrivial solutions. We point out that the proof for the constant sign solutions in \cite{Gasinski-Winkert-2021} is based on a mountain-pass type argument and so different from the treatment we used in this paper. Very recently, Farkas-Fiscella-Winkert \cite{Farkas-Fiscella-Winkert-2021} studied singular Finsler double phase problems with nonlinear boundary condition and critical growth of the form
\begin{equation}\label{problem8}
	\begin{aligned}
		%-\Delta_{F,p}u-\mu(x)\Delta_{F,q} & = u^{p^{*}-1}+cu^{\gamma-1}+\lambda g(u)-u^{p-1}-\mu(x)u^{q-1} && \text{in } \Omega,\\
		-\divergenz (A(u)) +u^{p-1}+\mu(x)u^{q-1}& = u^{p^{*}-1}+\lambda \l(u^{\gamma-1}+ g_1(x,u)\r) \quad && \text{in } \Omega,\\
		A(u)\cdot \nu & = u^{p_*-1}+g_2(x,u) &&\text{on } \partial \Omega,\\
		u & > 0 &&\text{in } \Omega,
	\end{aligned}
\end{equation}
where
\begin{align*}%\label{finsler_double_phase_operator}
	\divergenz (A(u)):=\divergenz\big(F^{p-1}(\nabla u)\nabla F(\nabla u) +\mu(x) F^{q-1}(\nabla u)\nabla F(\nabla u)\big)
\end{align*}
is the so-called Finsler double phase operator and $(\R^N,F)$ stands for a Minkowski space. The existence of one weak solution of \eqref{problem8} is proven by applying variational tools and truncation techniques.

For existence results for double phase problems with homogeneous Dirichlet boundary condition we refer to the papers of Colasuonno-Squassina \cite{Colasuonno-Squassina-2016} (eigenvalue problem for the double phase operator), Farkas-Winkert \cite{Farkas-Winkert-2020} (Finsler double phase problems), Gasi\'nski-Papa\-georgiou \cite{Gasinski-Papageorgiou-2019} (locally Lipschitz right-hand side), Gasi\'nski-Winkert \cite{Gasinski-Winkert-2020a,Gasinski-Winkert-2020b} (convection and superlinear problems), Liu-Dai \cite{Liu-Dai-2018} (Nehari manifold approach), Marino-Winkert \cite{Marino-Winkert-2020} (systems of double phase operators), Perera-Squassina \cite{Perera-Squassina-2019} (Morse theoretical approach), Zeng-Bai-Gasi\'nski-Winkert \cite{Zeng-Bai-Gasinski-Winkert-2020, Zeng-Gasinski-Winkert-Bai-2020} (multivalued obstacle problems) and the references therein. Related works dealing with certain types of double phase problems can be found in the works of Bahrouni-R\u{a}dulescu-Winkert \cite{Bahrouni-Radulescu-Winkert-2020} (Baouendi-Grushin operator), Barletta-Tornatore \cite{Barletta-Tornatore-2021} (convection problems in Orlicz spaces), Liu-Dai \cite{Liu-Dai-2020} (unbounded domains), Papageorgiou-R\u{a}dulescu-Repov\v{s} \cite{Papageorgiou-Radulescu-Repovs-2019} (discontinuity property for the spectrum), R\u{a}dulescu \cite{Radulescu-2019} (overview about isotropic and anisotropic double phase problems) and Zeng-Bai-Gasi\'nski-Winkert \cite{Zeng-Bai-Gasinski-Winkert-2020b} (convergence properties for double phase problems). Finally, we mention the nice overview article of Mingione-R\u{a}dulescu \cite{Mingione-Radulescu-2021}  about recent developments for problems with nonstandard growth and nonuniform ellipticity.

The paper is organized as follows. In Section \ref{section_2} we recall the main properties of the double phase operator including the properties of the Musielak-Orlicz Sobolev space $\Wp{\mathcal{H}}$. In Section \ref{section_3} we prove the existence of at least one solution of \eqref{problem} when $h_1$ depends on the gradient of the solution, see Theorem \ref{thm1}. The proof is based on the surjectivity result for pseudomonotone operators and on the properties of the eigenvalues of the Robin and Steklov eigenvalue problems for the $p$-Laplacian. Finally, in the last section, we skip the convection term and use variational tools in order to prove the existence of two constant sign solutions for superlinear problems. We consider two different problems. The first problem is treated by properties of the first Steklov eigenvalue and the second one by the first Robin eigenvalue, see Theorems \ref{thm2} and \ref{thm3}. 

%***********************************************************************************************************************************
\section{Preliminaries}\label{section_2}
%***********************************************************************************************************************************

In this section we recall some definitions and present the main tools which will be needed in the sequel.

For every $1 \le r< \infty$ we denote by $L^r(\Om)$ and $L^r(\Om; \RN)$ the usual Lebesgue spaces equipped with the norm $\|\cdot \|_r$ and for $1< r< \infty$ we consider the corresponding Sobolev space $W^{1, r}(\Om)$  endowed with the norm $\|\cdot \|_{1,r}$. It is known that $W^{1,r}(\Om) \hookrightarrow L^{\hat{r}}(\Omega)$ is compact for $\hat{r}<r^*$ and continuous for $\hat{r}=r^*$, where $r^*$ is the critical exponent of $r$ defined by
\begin{align}\label{critical_exponent_domain}
	r^*=
	\begin{cases}
		\frac{Nr}{N-r} &\text{if }r<N,\\
		\text{any }\ell\in(r,\infty) & \text{if }r\geq N.
	\end{cases}
\end{align}

On the boundary $\partial \Omega$ of $\Omega$ we consider the $(N-1)$-dimensional Hausdorff (surface) measure $\sigma$ and denote by $\Lprand{r}$ the boundary Lebesgue space with norm $\|\cdot\|_{r,\partial\Omega}$. From the definition of the trace mapping we know that $W^{1,r}(\Omega) \hookrightarrow \Lprand{\tilde{r}}$ is compact for $\tilde{r}<r_*$ and continuous for $\tilde{r}=r_*$, where $r_*$ is the critical exponent of $r$ on the boundary given by
\begin{align}\label{critical_exponent_boundary}
	r_*=
	\begin{cases}
		\frac{(N-1)r}{N-r} &\text{if }r<N,\\
		\text{any }\ell\in(r,\infty) & \text{if }r\geq N.
	\end{cases}
\end{align}
For simplification we will avoid the notation of the trace operator throughout the paper.

In the entire paper we will assume that
\begin{align}\label{condition_poincare}
	1<p<q<N \quad \text{and}\quad
	0\leq \mu(\cdot) \in \Lp{1}.
\end{align}
Note that the conditions in \eqref{condition_poincare} are quite general. In all the other mentioned works for Neumann double phase problems (see, for example, \cite{Farkas-Fiscella-Winkert-2021}, \cite{Gasinski-Winkert-2021}, \cite{Papageorgiou-Radulescu-Repovs-2020d}, \cite{Papageorgiou-Vetro-Vetro-2020})  the condition
\begin{align*}
	\frac{Nq}{N+q-1}<p
\end{align*}
is needed, which is equivalent to $q<p_*$ and so $q<p^*$ is also satisfied. We do not need this restriction in the current paper.

Let $\mathcal{H}\colon \Omega \times [0,\infty)\to [0,\infty)$ be the function defined by
\begin{align*}
	\mathcal H(x,t)= t^p+\mu(x)t^q.
\end{align*}
Based on this we can introduce the modular function given by
\begin{align*}%\label{modular}
	\rho_{\mathcal{H}}(u):=\into \mathcal{H}(x,|u|)\diff x=\into \big(|u|^{p}+\mu(x)|u|^q\big)\diff x.
\end{align*}
Then, the Musielak-Orlicz space $L^\mathcal{H}(\Omega)$ is defined by
\begin{align*}
	L^\mathcal{H}(\Omega)=\left \{u ~ \Big | ~ u\colon \Omega \to \R \text{ is measurable and } \rho_{\mathcal{H}}(u)<+\infty \right \}
\end{align*}
equipped with the Luxemburg norm
\begin{align*}
	\|u\|_{\mathcal{H}} = \inf \left \{ \tau >0 : \rho_{\mathcal{H}}\left(\frac{u}{\tau}\right) \leq 1  \right \}.
\end{align*}
From Colasuonno-Squassina \cite[Proposition 2.14]{Colasuonno-Squassina-2016} we know that the space $L^\mathcal{H}(\Omega)$ is a reflexive Banach space. Moreover, we need the seminormed space
\begin{align*}
	L^q_\mu(\Omega)=\left \{u ~ \Big | ~ u\colon \Omega \to \R \text{ is measurable and } \into \mu(x) |u|^q \diff x< +\infty \right \},
\end{align*}
which is endowed with the seminorm
\begin{align*}
	\|u\|_{q,\mu} = \left(\into \mu(x) |u|^q \diff x \right)^{\frac{1}{q}}.
\end{align*}
Analogously, we define $L^q_\mu(\Omega;\R^N)$. 

The Musielak-Orlicz Sobolev space $W^{1,\mathcal{H}}(\Omega)$ is defined by
\begin{align*}
	W^{1,\mathcal{H}}(\Omega)= \left \{u \in L^\mathcal{H}(\Omega) \,:\, |\nabla u| \in L^{\mathcal{H}}(\Omega) \right\}
\end{align*}
equipped with the norm
\begin{align*}
\|u\|_{1,\mathcal{H}}= \|\nabla u \|_{\mathcal{H}}+\|u\|_{\mathcal{H}},
\end{align*}
where $\|\nabla u\|_\mathcal{H}=\|\,|\nabla u|\,\|_{\mathcal{H}}$. As before, we know that $W^{1,\mathcal{H}}(\Omega)$ is a reflexive Banach space. 

The following proposition states the main embedding results for the spaces $\Lp{\mathcal{H}}$ and $\Wp{\mathcal{H}}$. We refer to Crespo-Blanco-Gasi\'nski-Harjulehto-Winkert \cite[Proposition 2.17]{Crespo-Blanco-Gasinski-Winkert-2021}.

\begin{proposition}\label{proposition_embeddings}
	Let \eqref{condition_poincare} be satisfied and let
	\begin{align}\label{critical_exponents}
	p^*:=\frac{Np}{N-p}
	\quad\text{and}\quad
	p_*:=\frac{(N-1)p}{N-p}
	\end{align}
	be the critical exponents to $p$, see \eqref{critical_exponent_domain} and \eqref{critical_exponent_boundary} for $r=p$. Then the following embeddings hold:
	\begin{enumerate}
		\item[\textnormal{(i)}]
		$\Lp{\mathcal{H}} \hookrightarrow \Lp{r}$ and $\Wp{\mathcal{H}}\hookrightarrow \Wp{r}$ are continuous for all $r\in [1,p]$;
		\item[\textnormal{(ii)}]
		$\Wp{\mathcal{H}} \hookrightarrow \Lp{r}$ is continuous for all $r \in [1,p^*]$;
		\item[\textnormal{(iii)}]
		$\Wp{\mathcal{H}} \hookrightarrow \Lp{r}$ is compact for all $r \in [1,p^*)$;
		\item[\textnormal{(iv)}]
		$\Wp{\mathcal{H}} \hookrightarrow \Lprand{r}$ is continuous for all $r \in [1,p_*]$;
		\item[\textnormal{(v)}]
		$\Wp{\mathcal{H}} \hookrightarrow \Lprand{r}$ is compact for all $r \in [1,p_*)$;
		\item[\textnormal{(vi)}]
		$\Lp{\mathcal{H}} \hookrightarrow L^q_\mu(\Omega)$ is continuous.
\end{enumerate}
\end{proposition}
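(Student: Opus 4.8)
The plan is to reduce all six assertions to the classical Sobolev and trace embeddings for $\Wp{p}$ that were recalled at the beginning of Section \ref{section_2}; the only genuinely Musielak--Orlicz-specific point is the continuous embedding of $L^{\mathcal{H}}(\Omega)$ into the $L^p$-scale.

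First I would prove that $L^{\mathcal{H}}(\Omega)\hookrightarrow\Lp{p}$ is continuous, together with statement (vi). The key observation is that $\rho_{\mathcal{H}}$ is lower semicontinuous with respect to pointwise convergence (via Fatou's lemma, using that $t\mapsto\mathcal{H}(x,t)$ is continuous and nonnegative), so that $\rho_{\mathcal{H}}(u/\|u\|_{\mathcal{H}})\le 1$ for every $u\in L^{\mathcal{H}}(\Omega)\setminus\{0\}$. Combining this with the elementary pointwise bounds $t^p\le\mathcal{H}(x,t)$ and $\mu(x)t^q\le\mathcal{H}(x,t)$ yields
\begin{align*}
\into\Big|\frac{u}{\|u\|_{\mathcal{H}}}\Big|^p\diff x\le\rho_{\mathcal{H}}\Big(\frac{u}{\|u\|_{\mathcal{H}}}\Big)\le 1
\qquad\text{and}\qquad
\into\mu(x)\Big|\frac{u}{\|u\|_{\mathcal{H}}}\Big|^q\diff x\le 1,
\end{align*}
hence $\|u\|_p\le\|u\|_{\mathcal{H}}$ and $\|u\|_{q,\mu}\le\|u\|_{\mathcal{H}}$, which is exactly (vi) and the continuity of $L^{\mathcal{H}}(\Omega)\hookrightarrow\Lp{p}$. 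Since $\Omega$ is bounded, H\"older's inequality then gives $\Lp{p}\hookrightarrow\Lp{r}$ for all $r\in[1,p]$, and applying the resulting embedding $L^{\mathcal{H}}(\Omega)\hookrightarrow\Lp{r}$ to $u$ and to $|\nabla u|$ separately proves the two continuous embeddings in (i); in particular, for $r=p$ one obtains the continuous embedding $\WH\hookrightarrow\Wp{p}$, which is the bridge to the classical theory.

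For (ii)--(v) I would simply compose $\WH\hookrightarrow\Wp{p}$ with the classical embeddings for $\Wp{p}$ (apply \eqref{critical_exponent_domain} and \eqref{critical_exponent_boundary} with $r=p$, so that $r^*=p^*$ and $r_*=p_*$): the continuity of $\Wp{p}\hookrightarrow\Lp{p^*}$ and of $\Wp{p}\hookrightarrow\Lprand{p_*}$ gives the endpoint cases of (ii) and (iv); for $r$ strictly below the critical exponent the classical embeddings $\Wp{p}\hookrightarrow\Lp{r}$ and $\Wp{p}\hookrightarrow\Lprand{r}$ are compact, and the composition of a bounded operator with a compact operator is compact, which gives (iii) and (v) (and, a fortiori, the remaining continuity in (ii) and (iv)).

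The only step that requires genuine care is the first one, namely passing from the pointwise domination of the three integrands to the comparison of the Luxemburg norm with the Lebesgue norm and with the seminorm $\|\cdot\|_{q,\mu}$; this rests entirely on the lower semicontinuity of $\rho_{\mathcal{H}}$, which guarantees that the infimum defining $\|\cdot\|_{\mathcal{H}}$ is attained on the modular unit ball. Everything after that is a routine reduction to the classical Sobolev embedding theorems on the bounded Lipschitz domain $\Omega$ and its boundary.
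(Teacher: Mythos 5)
Your proof is correct, and it is the standard reduction to classical Sobolev theory that the cited reference (Crespo-Blanco--Gasi\'nski--Harjulehto--Winkert, Proposition 2.17) follows as well; the paper itself does not reprove the result but simply cites that work. The one point worth stating cleanly is the step you flag as requiring care: taking $\tau_n\downarrow\|u\|_{\mathcal H}$ with $\rho_{\mathcal H}(u/\tau_n)\le 1$, the fact that $t\mapsto\mathcal H(x,t)$ is nondecreasing gives $\mathcal H(x,|u|/\tau_n)\uparrow\mathcal H(x,|u|/\|u\|_{\mathcal H})$ pointwise, so monotone convergence (or Fatou, as you invoke) yields $\rho_{\mathcal H}(u/\|u\|_{\mathcal H})\le 1$, from which the pointwise domination $t^p\le\mathcal H(x,t)$ and $\mu(x)t^q\le\mathcal H(x,t)$ gives $\|u\|_p\le\|u\|_{\mathcal H}$ and $\|u\|_{q,\mu}\le\|u\|_{\mathcal H}$; everything else is H\"older on the bounded domain and composition of the bridge embedding $\WH\hookrightarrow\Wp{p}$ with the classical Sobolev and trace embeddings, with compactness preserved under composition with a bounded map. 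Note also that your argument uses only $1<p<q<N$ and $0\le\mu\in L^1(\Omega)$, which matches the paper's emphasis that no relation such as $q<p^*$ is required for these embeddings.
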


We equip the space $W^{1,\mathcal{H}}(\Omega)$ with the equivalent norm
\begin{align*}
	\|u\|_{0}:=\inf\left\{\lambda >0 :
	\int_\Omega
	\left[\l(\frac{|\nabla u|}{\lambda}\r)^{p}+\mu(x)\l(\frac{|\nabla u|}{\lambda}\r)^{q}+\l(\frac{|u|}{\lambda}\r)^{p}+\mu(x)\l(\frac{|u|}{\lambda}\r)^{q}\,\right]\diff x\le1\right\}.
\end{align*}
For $u \in \Wp{\mathcal{H}}$ let
\begin{align}\label{modular2}
\hat{\rho}_\mathcal{H}(u) =\into \l(|\nabla u|^{p}+\mu(x)|\nabla u|^q\r)\diff x+\into \l(|u|^{p}+\mu(x)|u|^q\r)\diff x.
\end{align}

Based on the proof of Liu-Dai \cite[Proposition 2.1]{Liu-Dai-2018} we have the following relations between the norm $\|\cdot\|_{0}$ and the modular function $\hat{\rho}_\mathcal{H}$, see also Crespo-Blanco-Gasi\'nski-Harjulehto-Winkert \cite[Proposition 2.16]{Crespo-Blanco-Gasinski-Winkert-2021}.

\begin{proposition}\label{proposition_modular_properties2}
	Let \eqref{condition_poincare} be satisfied, let $y\in \WH$ and let $\hat{\rho}_{\mathcal{H}}$ be defined as in  \eqref{modular2}.
	\begin{enumerate}
		\item[\textnormal{(i)}]
		If $y\neq 0$, then $\|y\|_{0}=\lambda$ if and only if $ \hat{\rho}_{\mathcal{H}}(\frac{y}{\lambda})=1$;
		\item[\textnormal{(ii)}]
		$\|y\|_{0}<1$ (resp.\,$>1$, $=1$) if and only if $ \hat{\rho}_{\mathcal{H}}(y)<1$ (resp.\,$>1$, $=1$);
		\item[\textnormal{(iii)}]
		If $\|y\|_{0}<1$, then $\|y\|_{0}^q\leqslant \hat{\rho}_{\mathcal{H}}(y)\leqslant\|y\|_{0}^p$;
		\item[\textnormal{(iv)}]
		If $\|y\|_{0}>1$, then $\|y\|_{0}^p\leqslant \hat{\rho}_{\mathcal{H}}(y)\leqslant\|y\|_{0}^q$;
		\item[\textnormal{(v)}]
		$\|y\|_{0}\to 0$ if and only if $ \hat{\rho}_{\mathcal{H}}(y)\to 0$;
		\item[\textnormal{(vi)}]
		$\|y\|_{0}\to +\infty$ if and only if $ \hat{\rho}_{\mathcal{H}}(y)\to +\infty$.
	\end{enumerate}
\end{proposition}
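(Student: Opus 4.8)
The plan is to reduce everything to the behaviour of the single real variable $t\mapsto\hat{\rho}_{\mathcal{H}}(ty)$ along the ray through a fixed $y\in\WH$ with $y\neq0$. Writing
\begin{align*}
A_y:=\into\l(|\nabla y|^p+|y|^p\r)\diff x,\qquad
B_y:=\into\mu(x)\l(|\nabla y|^q+|y|^q\r)\diff x,
\end{align*}
homogeneity gives $\hat{\rho}_{\mathcal{H}}(ty)=t^pA_y+t^qB_y$ for all $t>0$, where $0\le A_y,B_y<\infty$ (finiteness since $y,|\nabla y|\in L^{\mathcal{H}}(\Om)$, i.e.\ $\rho_{\mathcal{H}}(y)+\rho_{\mathcal{H}}(|\nabla y|)<\infty$) and $A_y>0$ (otherwise $y=0$ and $\nabla y=0$ a.e., which forces $y=0$ in $\WH$). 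Consequently $t\mapsto\hat{\rho}_{\mathcal{H}}(ty)$ is continuous and strictly increasing on $(0,\infty)$ with limits $0$ at $0^+$ and $+\infty$ at $+\infty$; equivalently $\lambda\mapsto\hat{\rho}_{\mathcal{H}}(y/\lambda)$ is continuous, strictly decreasing, and runs from $+\infty$ down to $0$. By the intermediate value theorem there is a unique $\lambda_0>0$ with $\hat{\rho}_{\mathcal{H}}(y/\lambda_0)=1$, and then $\{\lambda>0:\hat{\rho}_{\mathcal{H}}(y/\lambda)\le1\}=[\lambda_0,\infty)$, whose infimum is $\lambda_0$. This yields $\|y\|_{0}=\lambda_0$, and reading the equivalence off the uniqueness of $\lambda_0$ gives statement (i).

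For (ii) I would argue by trichotomy. Statement (i) read with $\lambda=1$ already gives $\|y\|_{0}=1\iff\hat{\rho}_{\mathcal{H}}(y)=1$. If $\|y\|_{0}<1$, then using the strict monotonicity of $t\mapsto\hat{\rho}_{\mathcal{H}}(ty)$ together with (i),
\begin{align*}
\hat{\rho}_{\mathcal{H}}(y)=\hat{\rho}_{\mathcal{H}}\l(\|y\|_{0}\cdot\tfrac{y}{\|y\|_{0}}\r)<\hat{\rho}_{\mathcal{H}}\l(\tfrac{y}{\|y\|_{0}}\r)=1,
\end{align*}
and symmetrically $\|y\|_{0}>1$ forces $\hat{\rho}_{\mathcal{H}}(y)>1$. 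Since $\{<1\}$, $\{=1\}$, $\{>1\}$ partition both sides, all three equivalences in (ii) follow. For (iii) and (iv) set $\lambda:=\|y\|_{0}$ and $z:=y/\lambda$, so that $A_z+B_z=\hat{\rho}_{\mathcal{H}}(z)=1$ by (i) and $\hat{\rho}_{\mathcal{H}}(y)=\lambda^pA_z+\lambda^qB_z$. If $\lambda<1$ then $\lambda^q\le\lambda^p$, hence
\begin{align*}
\lambda^q=\lambda^q(A_z+B_z)\le\lambda^pA_z+\lambda^qB_z\le\lambda^p(A_z+B_z)=\lambda^p,
\end{align*}
that is $\|y\|_{0}^q\le\hat{\rho}_{\mathcal{H}}(y)\le\|y\|_{0}^p$; if $\lambda>1$ the reversed elementary inequality $\lambda^p\le\lambda^q$ yields $\|y\|_{0}^p\le\hat{\rho}_{\mathcal{H}}(y)\le\|y\|_{0}^q$. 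Finally (v) and (vi) are immediate: in the regime where $\|y\|_{0}<1$ (equivalently $\hat{\rho}_{\mathcal{H}}(y)<1$, by (ii)) the sandwich of (iii) shows $\hat{\rho}_{\mathcal{H}}(y)\le\|y\|_{0}^p$ and $\|y\|_{0}^q\le\hat{\rho}_{\mathcal{H}}(y)$, so one of the two quantities tends to $0$ precisely when the other does; likewise the bounds of (iv) give (vi) in the regime $\|y\|_{0}>1$.

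There is no genuinely hard step. The only points that need a little care are: (a) the justification that $y\neq0$ in $\WH$ really forces $A_y>0$, so that the ray function is strictly monotone and the Luxemburg-type infimum defining $\|y\|_{0}$ is attained with modular value exactly $1$; and (b) keeping track of the two elementary inequalities $\lambda^q\le\lambda^p$ for $0<\lambda<1$ versus $\lambda^p\le\lambda^q$ for $\lambda>1$, which are exactly what produces the switch between the exponents $p$ and $q$ in (iii)--(iv). An essentially equivalent alternative is to invoke the general correspondence between norm and modular for Musielak-Orlicz spaces, as done in Liu-Dai \cite{Liu-Dai-2018} and Crespo-Blanco-Gasi\'nski-Harjulehto-Winkert \cite{Crespo-Blanco-Gasinski-Winkert-2021}; the direct computation above is short enough to be worth carrying out explicitly.
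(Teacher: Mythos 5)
Your argument is correct and complete. The paper itself does not prove this proposition; it simply cites Liu-Dai \cite[Proposition 2.1]{Liu-Dai-2018} and Crespo-Blanco-Gasi\'nski-Harjulehto-Winkert \cite[Proposition 2.16]{Crespo-Blanco-Gasinski-Winkert-2021}. Your direct computation --- decomposing $\hat{\rho}_{\mathcal{H}}(ty)=t^pA_y+t^qB_y$, observing that $A_y>0$ when $y\neq0$ so that this is strictly increasing and ranges over $(0,\infty)$, and then reading off (i)--(ii) from the Luxemburg infimum and (iii)--(iv) from the elementary comparison of $\lambda^p$ and $\lambda^q$ against $A_z+B_z=1$ --- is precisely the standard argument those references use, so you have in effect reproduced the cited proof in a self-contained way. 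The only (very minor) point worth adding for completeness is that in (ii)--(iii) the case $y=0$ must be handled trivially, since the substitution $z=y/\|y\|_0$ presupposes $y\neq0$; this is a one-line observation ($\|0\|_0=0=\hat{\rho}_{\mathcal{H}}(0)$) and not a gap.
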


Let us recall some definitions which we will need in the next sections.

\begin{definition}\label{SplusPM}
	Let $(X,\|\cdot\|_X)$ be a reflexive Banach space, $X^*$ its dual space and denote by $\langle \cdot \,, \cdot\rangle$ its duality pairing. Let $A\colon X\to X^*$, then $A$ is called
	\begin{enumerate}[leftmargin=1cm]
		\item[\textnormal{(i)}]
		to satisfy the $(\Ss_+)$-property if $u_n \weak u$ in $X$ and $\limsup_{n\to \infty} \langle Au_n,u_n-u\rangle \leq 0$ imply $u_n\to u$ in $X$;
		\item[\textnormal{(ii)}]
		pseudomonotone if $u_n \weak u$ in $X$ and $\limsup_{n\to \infty} \langle Au_n,u_n-u\rangle \leq 0$ imply $Au_n \weak Au$ and $\langle Au_n,u_n\rangle \to \langle Au,u\rangle$;
		\item[\textnormal{(iii)}]
		coercive if
		\begin{align*}%\label{coercivity}
		\lim_{\|u\|_X \to \infty} \frac{\langle Au, u \rangle}{\|u\|_X}= \infty.
		\end{align*}
	\end{enumerate}
\end{definition}

\begin{remark}
	The classical definition of pseudomonotonicity is the following one: From $u_n \weak u$ in $X$ and $\limsup_{n\to \infty} \langle Au_n,u_n-u\rangle \leq 0$ we have 
	\begin{align*}
		\liminf_{n \to \infty} \langle Au_n,u_n-v\rangle \geq \langle Au , u-v\rangle \quad\text{for all }v \in X.
	\end{align*}
	This definition is equivalent to the one in Definition \ref{SplusPM}\textnormal{(ii)} when the operator is bounded. Since we are only considering bounded operators, we will use the one in Definition \ref{SplusPM}\textnormal{(ii)}.
\end{remark}
The following surjectivity result for pseudomonotone operators will be used in Section \ref{section_3}. It can be found, for example, in Papageorgiou-Winkert \cite[Theorem 6.1.57]{Papageorgiou-Winkert-2018}.

\begin{theorem}\label{theorem_pseudomonotone}
    Let $X$ be a real, reflexive Banach space, let $A\colon X\to X^*$ be a pseudomonotone, bounded, and coercive operator, and let $b\in X^*$. Then, a solution to the equation $Au=b$ exists.
\end{theorem}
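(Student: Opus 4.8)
The plan is to establish this classical surjectivity result by the Galerkin method, playing coercivity off against pseudomonotonicity. If $X$ is not already separable one first reduces to the case where there is an increasing sequence of finite-dimensional subspaces $X_1\subseteq X_2\subseteq\cdots$ with $\overline{\bigcup_{n\in\N}X_n}=X$; in full generality one argues instead with the directed family of \emph{all} finite-dimensional subspaces and nets, but the structure of the proof is unchanged, so I describe the sequential version. For fixed $n$ let $i_n\colon X_n\hookrightarrow X$ be the inclusion with adjoint $i_n^*\colon X^*\to X_n^*$, and set $A_n:=i_n^*\circ A\circ i_n\colon X_n\to X_n^*$. Since a bounded pseudomonotone operator is demicontinuous and $X_n^*$ is finite-dimensional, $A_n$ is continuous. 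Coercivity yields an $R>0$, independent of $n$, with $\langle Au,u\rangle\geq(\|b\|_{X^*}+1)\|u\|_X$ whenever $\|u\|_X=R$, so that on the sphere $\{u\in X_n:\|u\|_X=R\}$ one has $\langle A_nu-i_n^*b,u\rangle\geq\|u\|_X>0$. The standard consequence of Brouwer's fixed point theorem (the acute-angle lemma) then produces $u_n\in X_n$ with $\|u_n\|_X\leq R$ and
\begin{align*}
	\langle Au_n,v\rangle=\langle b,v\rangle\qquad\text{for all }v\in X_n.
\end{align*}

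Next I would extract limits. The bound $\|u_n\|_X\leq R$ is uniform in $n$, so by reflexivity a subsequence (not relabeled) satisfies $u_n\weak u$ in $X$, and since $A$ is bounded a further subsequence gives $Au_n\weak\xi$ in $X^*$. Testing the Galerkin identity with $v=u_n$ gives $\langle Au_n,u_n\rangle=\langle b,u_n\rangle\to\langle b,u\rangle$. To verify the pseudomonotonicity hypothesis, fix $w\in\bigcup_{m\in\N}X_m$; for all large $n$ one has $w\in X_n$, hence $\langle Au_n,w\rangle=\langle b,w\rangle$, and consequently
\begin{align*}
	\langle Au_n,u_n-u\rangle=\langle b,u_n-w\rangle+\langle Au_n,w-u\rangle\longrightarrow\langle b,u-w\rangle+\langle\xi,w-u\rangle=\langle b-\xi,u-w\rangle.
\end{align*}
Choosing $w$ in $\bigcup_{m\in\N}X_m$ arbitrarily close to $u$ makes the right-hand side arbitrarily small, so $\limsup_{n\to\infty}\langle Au_n,u_n-u\rangle\leq0$. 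By pseudomonotonicity (Definition \ref{SplusPM}\textnormal{(ii)}) it follows that $Au_n\weak Au$ and $\langle Au_n,u_n\rangle\to\langle Au,u\rangle$; in particular $\xi=Au$. Passing to the limit in the Galerkin identity yields $\langle Au,w\rangle=\langle b,w\rangle$ for every $w\in\bigcup_{m\in\N}X_m$, and density of this union in $X$ gives $Au=b$.

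The main obstacle is the passage to the limit: the Galerkin relations only hold when tested against elements of the finite-dimensional subspaces, so one must squeeze out of them exactly the inequality $\limsup_{n\to\infty}\langle Au_n,u_n-u\rangle\leq0$ needed to invoke pseudomonotonicity, and this is the point where coercivity (which supplies both the finite-dimensional solvability and the uniform a priori bound) and the density of $\bigcup_{m}X_m$ must be combined carefully. A secondary technical point is the reduction, when $X$ is not assumed separable, to a setting with a suitable exhausting family of finite-dimensional subspaces, handled by replacing the sequence $(X_n)$ with the net of all such subspaces.
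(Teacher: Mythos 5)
The paper does not prove this statement; it is quoted verbatim from Papageorgiou--Winkert, \emph{Applied Nonlinear Functional Analysis}, Theorem~6.1.57, so there is no in-text argument to compare against. Your Galerkin proof is correct and is, in fact, the standard textbook route for this result: finite-dimensional solvability from coercivity via the acute-angle (Brouwer) lemma, a uniform a priori bound, weak subsequential limits of both $u_n$ and $Au_n$, the identity $\lim_n\langle Au_n,u_n-u\rangle=\langle b-\xi,u-w\rangle$ for every $w$ in the dense union (hence the $\limsup$ is $\le 0$), and then pseudomonotonicity in the form given by Definition~\ref{SplusPM}(ii) to identify $\xi=Au$ and pass to the limit. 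The one step you invoke without proof --- that a bounded pseudomonotone operator is demicontinuous, needed for the continuity of the finite-dimensional restrictions $A_n$ --- is indeed standard and true, but it is worth flagging that it is itself a small lemma that must be cited or proved. Your remark about the nonseparable case (replacing the sequence $(X_n)$ by the directed net of all finite-dimensional subspaces) is the right fix and is how the full-generality version is usually handled; in the application here $X=W^{1,\mathcal H}(\Omega)$ is separable, so the sequential version already suffices.
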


Let $A\colon \Wp{\mathcal{H}}\to \Wp{\mathcal{H}}^*$ be the nonlinear map defined by
\begin{align}\label{operator_representation}
	\begin{split}
		\langle A(u),\ph\rangle_{\mathcal{H}} &=\into \left(|\nabla u|^{p-2}\nabla u+\mu(x)|\nabla u|^{q-2}\nabla u \right)\cdot\nabla\ph \diff x\\
	& \quad +\into \left(|u|^{p-2} u+\mu(x)| u|^{q-2} u \right)\ph \diff x
	\end{split}
\end{align}
for all $u,\ph\in\Wp{\mathcal{H}}$, where $\lan\,\cdot\,,\,\cdot\,\ran_{\mathcal{H}}$ is the duality pairing between $\Wp{\mathcal{H}}$ and its dual space $\Wp{\mathcal{H}}^*$.  The operator $A\colon \Wp{\mathcal{H}}\to \Wp{\mathcal{H}}^*$ has the following properties, see Crespo-Blanco-Gasi\'nski-Harjulehto-Winkert \cite[Proposition 3.5]{Crespo-Blanco-Gasinski-Winkert-2021}.

\begin{proposition}\label{properties_operator_double_phase}
	Let \eqref{condition_poincare} be satisfied. Then, the operator $A$ defined by \eqref{operator_representation} is bounded (that is, it maps bounded sets into bounded sets), continuous, strictly monotone (hence maximal monotone) and it is of type $(\Ss_+)$.
\end{proposition}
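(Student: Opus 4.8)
The plan is to verify the four assertions in turn, writing $A=A_1+A_2$ where $\langle A_1(u),\ph\rangle_{\mathcal H}=\into\l(|\nabla u|^{p-2}\nabla u+\mu(x)|\nabla u|^{q-2}\nabla u\r)\cdot\nabla\ph\diff x$ is the gradient part and $\langle A_2(u),\ph\rangle_{\mathcal H}=\into\l(|u|^{p-2}u+\mu(x)|u|^{q-2}u\r)\ph\diff x$ is the lower-order part. For \emph{boundedness}, one estimates $|\langle A(u),\ph\rangle_{\mathcal H}|$ by Hölder's inequality: each unweighted term is bounded using the continuous embedding $\WH\hookrightarrow W^{1,p}(\Omega)$ of Proposition \ref{proposition_embeddings}(i), while for each $\mu$-weighted term one splits the weight as $\mu=\mu^{1/q'}\mu^{1/q}$ and uses the $L^q_\mu$--$L^{q'}_\mu$ duality, so that $|\langle A(u),\ph\rangle_{\mathcal H}|\le C\,(\|\nabla u\|_p^{p-1}+\|u\|_p^{p-1}+\|\nabla u\|_{q,\mu}^{q-1}+\|u\|_{q,\mu}^{q-1})\,\|\ph\|_{1,\mathcal H}$; converting the right-hand side back to $\|u\|_0$ via Proposition \ref{proposition_modular_properties2} shows $A$ maps bounded sets to bounded sets. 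For \emph{continuity}, if $u_n\to u$ in $\WH$ then $u_n\to u$ and $\nabla u_n\to\nabla u$ both in $L^p$ and in $L^q_\mu$; since the Nemytskij maps $t\mapsto|t|^{r-2}t$ act continuously from $L^r$ into $L^{r'}$ and from $L^q_\mu$ into $L^{q'}_\mu$ (by the standard subsequence plus pointwise convergence plus dominated convergence argument), Hölder's inequality yields $A(u_n)\to A(u)$ in $\WH^*$.

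\emph{Strict monotonicity} and \emph{maximal monotonicity.} Apply the elementary inequality $(|\xi|^{r-2}\xi-|\eta|^{r-2}\eta)\cdot(\xi-\eta)>0$, valid for all $r>1$ and all $\xi\neq\eta$ in $\R^N$ (resp.\ in $\R$), pointwise with $r=p$ and $r=q$ to each of the four terms of $\langle A(u)-A(v),u-v\rangle_{\mathcal H}$. Because $\mu\ge0$ a.e.\ in $\Omega$, all four resulting integrands are nonnegative, so $\langle A(u)-A(v),u-v\rangle_{\mathcal H}\ge0$; if this vanishes, then in particular $\into(|u|^{p-2}u-|v|^{p-2}v)(u-v)\diff x=0$ with nonnegative integrand that is strictly positive wherever $u\neq v$, forcing $u=v$ a.e. Hence $A$ is strictly monotone. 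A monotone, everywhere-defined, demicontinuous operator from a reflexive Banach space into its dual is maximal monotone, so the strict monotonicity together with the continuity already established gives that $A$ is maximal monotone.

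\emph{The $(\Ss_+)$-property — the main point.} Let $u_n\weak u$ in $\WH$ with $\limsup_{n\to\infty}\langle A(u_n),u_n-u\rangle_{\mathcal H}\le0$. Since $A(u)\in\WH^*$ is fixed, $\langle A(u),u_n-u\rangle_{\mathcal H}\to0$, and monotonicity gives $\langle A(u_n)-A(u),u_n-u\rangle_{\mathcal H}\ge0$; combining these forces $\langle A(u_n)-A(u),u_n-u\rangle_{\mathcal H}\to0$. But this quantity is the sum of the four nonnegative integrals $\into(|\nabla u_n|^{p-2}\nabla u_n-|\nabla u|^{p-2}\nabla u)\cdot\nabla(u_n-u)\diff x$, $\into\mu(x)(|\nabla u_n|^{q-2}\nabla u_n-|\nabla u|^{q-2}\nabla u)\cdot\nabla(u_n-u)\diff x$, and their two zero-order analogues; hence each of them tends to $0$. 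Using the refined elementary vector inequalities (the case split $r\ge2$ versus $1<r<2$) one upgrades this to $\nabla u_n\to\nabla u$ in $L^p(\Omega;\R^N)$ and in $L^q_\mu(\Omega;\R^N)$, and $u_n\to u$ in $L^p(\Omega)$ and in $L^q_\mu(\Omega)$. Adding the four convergences yields $\hat{\rho}_{\mathcal H}(u_n-u)\to0$, whence $\|u_n-u\|_0\to0$ by Proposition \ref{proposition_modular_properties2}(v), i.e.\ $u_n\to u$ in $\WH$.

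I expect the delicate step to be precisely this last upgrade in the range $1<r<2$ (which can occur since only $1<p<q<N$ is assumed). There the monotonicity integrand controls only a quantity of the type $|\nabla(u_n-u)|^2\big(|\nabla u_n|+|\nabla u|\big)^{r-2}$, so one recovers $L^r$-convergence of the gradients only after inserting the factor $\big(|\nabla u_n|+|\nabla u|\big)^{(2-r)r/2}$ and applying Hölder's inequality with exponents $2/r$ and $2/(2-r)$, using the $L^r$-boundedness of $\big(|\nabla u_n|+|\nabla u|\big)_n$; this is the place where the argument genuinely uses that $(u_n)_n$ is a bounded sequence in $\WH$. The same device handles the $\mu$-weighted $q$-term and the zero-order terms.
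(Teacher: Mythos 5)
The paper does not include a proof of this proposition; it is quoted from Crespo-Blanco--Gasi\'nski--Harjulehto--Winkert \cite[Proposition 3.5]{Crespo-Blanco-Gasinski-Winkert-2021}. Your blind proof is correct and is the standard self-contained argument one would expect to find in that reference, so the comparison below is against the expected argument rather than against text in the present paper.

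Your decomposition $A=A_1+A_2$, the H\"older estimates with the weight split as $\mu=\mu^{1/q'}\cdot\mu^{1/q}$, the Nemytskij-plus-dominated-convergence argument for continuity, the pointwise strict monotonicity of $\xi\mapsto|\xi|^{r-2}\xi$ driving strict monotonicity of $A$, the Browder--Minty lemma (monotone, hemicontinuous, everywhere defined $\Rightarrow$ maximal monotone), and the $(\Ss_+)$-argument via $\langle A(u_n)-A(u),u_n-u\rangle_{\mathcal H}\to 0$ followed by the case split $r\ge 2$ versus $1<r<2$ and the interpolation trick with exponents $2/r$ and $2/(2-r)$ --- all of this is exactly the intended route, and your exponents and the factor $(|\nabla u_n|+|\nabla u|)^{(2-r)r/2}$ are correct. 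You also correctly identified that the identical device works for the $\mu$-weighted $q$-term, where one inserts $\mu^{q/2}\cdot\mu^{(2-q)/2}=\mu$ and uses boundedness of $(\nabla u_n)$ in $L^q_\mu(\Omega;\R^N)$, and that weak convergence in $\WH$ supplies the required boundedness of the sequence. The only point worth making explicit is that for $r=q$ the ``$1<r<2$'' branch is not vacuous (the standing assumption is only $1<p<q<N$), which you noted; and in the continuity step, passing from $\|u_n-u\|_0\to 0$ to the four separate convergences of $\nabla u_n,u_n$ in $L^p$ and $L^q_\mu$ goes through Proposition \ref{proposition_modular_properties2}(v), which you invoke correctly. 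No gaps.
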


For $s \in \R$, we set $s^{\pm}=\max\{\pm s,0\}$ and for $u \in \Wp{\mathcal{H}}$ we define $u^{\pm}(\cdot)=u(\cdot)^{\pm}$. We have
\begin{align*}
	u^{\pm} \in \Wp{\mathcal{H}}, \quad |u|=u^++u^-, \quad u=u^+-u^-.
\end{align*}
For $r>1$ we write $r'=\frac{r}{r-1}$.

Further, we denote by $C^1(\overline{\Omega})_+$ the positive cone
\begin{align*}
	C^1(\overline{\Omega})_+=\left\{u \in C^1(\overline{\Omega}): u(x) \geq 0 \,\,\text{for all }x \in \overline{\Omega}\right\}
\end{align*}
of the ordered Banach space $C^1(\overline{\Omega})$. This cone has a nonempty interior given by
\begin{align*}
	\interior=\left\{u \in C^1(\overline{\Omega}): u(x)>0 \,\,\text{for all }x \in  \close\right\}.
\end{align*}

Let us now recall some basic facts about the spectrum of the negative $r$-Laplacian with Robin and Steklov boundary condition, respectively, for $1<r<\infty$. We refer to the paper of L{\^e} \cite{Le-2006}. The $r$-Laplacian eigenvalue problem with Robin boundary condition is given by
\begin{equation}\label{robin}
	\begin{aligned}
		-\Delta_r u&= \lambda |u|^{r-2} u \quad && \text{in } \Om, \\
		|\nabla u|^{r-2} \nabla u \cdot \nu&= -\beta |u|^{r-2} u && \text{on } \rand,
	\end{aligned}
\end{equation}
where $\beta>0$. We know that problem \eqref{robin} has a smallest eigenvalue $\lambda_{1,r,\beta}^{\text{R}}>0$ which is isolated, simple and it can be variationally characterized by 
\begin{equation}\label{robin-first-characterization}
	\begin{split}
	\lambda_{1,r,\beta}^{\text{R}} =\inf_{u \in \Wp{r}\setminus\{0\}} \frac{\into |\nabla u|^r \diff x+ \beta \intor |u|^r \diff\sigma}{\into |u|^r \diff x}.
	\end{split}
	\end{equation}
By $u_{1,r,\beta}^{\text{R}}$ we denote the normalized (that is, $\|u_{1,r,\beta}^{\text{R}}\|_r=1$) positive eigenfunction corresponding to $\lambda_{1,r,\beta}^{\text{R}}$. We know that $u_{1,r,\beta}^{\text{R}}\in\interior$.

Further, we recall the $r$-Laplacian eigenvalue problem with Steklov boundary condition which is given by
\begin{equation}\label{steklov}
	\begin{aligned}
		-\Delta_r u&= -|u|^{r-2} u \quad && \text{in } \Om, \\
		|\nabla u|^{r-2} \nabla u \cdot \nu & = \lambda |u|^{r-2} u && \text{on } \rand.
	\end{aligned}
\end{equation}
As before, problem \eqref{steklov} has a smallest eigenvalue $\lambda_{1,r}^{\text{S}}>0$ which is isolated, simple and which can be characterized by 
\begin{equation}\label{steklov-first-characterization}
	\begin{split}
		\lambda_{1,r}^{\text{S}} =\inf_{u \in \Wp{r}\setminus\{0\}} \frac{\into |\nabla u|^r \diff x+ \into |u|^r \diff x}{\int_{\partial\Omega} |u|^r \diff \sigma}.
	\end{split}
\end{equation}
The first eigenfunction associated to the first eigenvalue $\lambda_{1,r}^{\text{S}}$ will be denoted by $u_{1,r}^{\text{S}}$ and we can assume it is normalized, that is, $\|u_{1,r}^{\text{S}}\|_{r,\partial\Omega}=1$. We have $u_{1,r}^{\text{S}}\in\interior$. 

%***********************************************************************************************************************************
\section{Existence results in case of convection}\label{section_3}
%***********************************************************************************************************************************
In this section we are interested in the existence of a solution of problem \eqref{problem} depending on the first eigenvalues of the Robin and Steklov eigenvalue problems of the $p$-Laplacian. We choose
\begin{align*}
	&&h_1(x,s,\xi)&=f(x,s,\xi)-|s|^{p-2}s-\mu(x)|s|^{q-2}s&&\text{for a.\,a.\,}x\in\Omega,\\
	&&h_2(x,s)&=g(x,s)-\zeta|s|^{p-2}s&&\text{for a.\,a.\,}x\in\partial\Omega,
\end{align*}
for all $s\in\R$ and for all $\xi\in\R^N$ with $\zeta>0$ specified later and Carath\'eodory functions $f$ and $g$ characterized in hypotheses (H1) below. Then \eqref{problem} becomes
\begin{equation}\label{problem1}
	\begin{aligned}
		-\divergenz(|\nabla u|^{p-2} \nabla u+ \mu(x) |\nabla u|^{q-2} \nabla u)&= f(x, u, \nabla u)- |u|^{p-2}u- \mu(x) |u|^{q-2}u  && \text{in } \Om, \\
		(|\nabla u|^{p-2} \nabla u+ \mu(x) |\nabla u|^{q-2} \nabla u) \cdot \nu&= g(x, u) -\zeta|u|^{p-2}u&& \text{on } \rand,
\end{aligned}
\end{equation}
where we assume the following hypotheses:

\begin{enumerate}
	\item[\textnormal{(H1)}]
		The mappings $f\colon \Omega \times \R \times \R^N \to \R $ and $g\colon \partial\Omega \times \R \to \R $ are Carath\'eodory functions with $f(x,0,0)\neq 0$ for a.\,a.\,$x\in\Omega$ such that the following conditions are satisfied:
		\begin{enumerate}
			\item[\textnormal{(i)}]
				There exist $\alpha_1 \in L^{\frac{r_1}{r_1-1}}(\Omega)$, $\alpha_2 \in L^{\frac{r_2}{r_2-1}}(\partial \Omega)$ and $a_1, a_2, a_3 \geq 0$ such that
				\begin{align*}
					&&|f(x, s, \xi)| &\leq a_1 |\xi|^{p \frac{r_1- 1}{r_1}}+ a_2 |s|^{r_1-1}+ \alpha_1(x) &&\text{for a.\,a.\,}x\in\Omega,\\
					&&|g(x, s)| &\leq a_3|s|^{r_2- 1}+\alpha_2(x)&&\text{for a.\,a.\,}x\in\partial \Omega,
				\end{align*}
				for all $ s \in \R$ and for all $\xi \in \R^N$, where $1< r_1< p^*$ and $1< r_2< p_*$ with the critical exponents $p^*$ and $p_*$ stated in \eqref{critical_exponents}.
			\item[\textnormal{(ii)}]
				There exist $w_1 \in L^1(\Omega)$, $w_2 \in L^1(\rand)$ and  $b_1, b_2, b_3 \geq 0$ such that
				\begin{align*}
					&& f(x, s, \xi)s &\le b_1 |\xi|^p+ b_2 |s|^p+ w_1(x) &&\text{for a.\,a.\,}x\in\Omega,\\
					&& g(x, s)s &\leq b_3 |s|^p+ \omega_2 (x)&&\text{for a.\,a.\,}x\in\partial \Omega,
				\end{align*}
				for all $ s \in \R$ and for all $\xi \in \R^N$.
		\end{enumerate} 
\end{enumerate}

A function $u \in \Wp{\mathcal{H}}$ is called a weak solution of problem  \eqref{problem1} if 
\begin{equation}\label{weak1}
	\begin{split}
		& \into \l(|\nabla u|^{p-2} \nabla u+ \mu(x) |\nabla u|^{q-2} \nabla u\r) \cdot \nabla \varphi \diff x+ \into\l(|u|^{p-2}u+ \mu(x) |u|^{q-2}u\r) \varphi \diff x \\
		& = \into f(x, u, \nabla u) \varphi \diff x+ \int_{\partial\Omega} g(x, u) \varphi \diff \sigma-\zeta \int_{\partial\Omega}|u|^{p-2}u\ph\diff \sigma 
	\end{split}
	\end{equation}
is satisfied for all $\varphi \in \Wp{\mathcal H}$. It is clear that this definition is well-defined.

The main result in this section is the following one.

\begin{theorem}\label{thm1}
        Let hypotheses  \eqref{condition_poincare} and \textnormal{(H1)} be satisfied. Then, there exists a nontrivial weak solution $ \hat{u}  \in \Wp{\mathcal H}\cap \Linf$ of problem \eqref{problem1} provided one of the following assertions is satisfied:
        \begin{enumerate}
        	\item[\textnormal{(A)}]
        		$b_1+b_2\l(\lambda_{1,p,\beta}^{\text{R}}\r)^{-1}<1 \ $  and $\ b_2\beta \l(\lambda_{1,p,\beta}^{\text{R}}\r)^{-1}+b_3<\zeta$;\\
        	\item[\textnormal{(B)}]
        		$\max\{b_1,b_2\}+b_3 \l(\lambda_{1,p}^{\text{S}}\r)^{-1}<1$ and $\zeta \geq 0$.
        \end{enumerate}
    Here $\lambda_{1,p,\beta}^{\text{R}}$ is the first eigenvalue of the $p$-Laplacian with Robin boundary condition with $\beta>0$ and $\lambda_{1,p}^{\text{S}}$ stands for the first eigenvalue of the $p$-Laplacian with Steklov boundary condition, see \eqref{robin} and \eqref{steklov}, respectively.
\end{theorem}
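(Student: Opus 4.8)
\emph{Plan of proof.} The idea is to rewrite \eqref{problem1} as an operator equation in the dual space $\WH^*$ and to invoke the surjectivity result Theorem~\ref{theorem_pseudomonotone}. With $A$ denoting the operator from \eqref{operator_representation}, I would introduce the Nemytskij-type maps $\langle N_f(u),\ph\rangle_{\mathcal H}=\into f(x,u,\nabla u)\ph\diff x$, $\langle N_g(u),\ph\rangle_{\mathcal H}=\intor g(x,u)\ph\diff\sigma$ and $\langle N_\partial(u),\ph\rangle_{\mathcal H}=\intor|u|^{p-2}u\ph\diff\sigma$, and set $\mathcal B:=A-N_f-N_g+\zeta N_\partial\colon\WH\to\WH^*$. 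Using the growth bounds (H1)(i), Hölder's inequality and the embeddings of Proposition~\ref{proposition_embeddings}(ii)--(v) (note $r_1<p^*$, $r_2<p_*$ and $p<p_*$), one checks (via $(r_1-1)r_1'=r_1$ for the gradient term) that $N_f,N_g,N_\partial$ are well defined, bounded and continuous; since $A$ is bounded by Proposition~\ref{properties_operator_double_phase}, so is $\mathcal B$, and by construction $u\in\WH$ satisfies \eqref{weak1} if and only if $\mathcal B(u)=0$.

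\emph{Pseudomonotonicity.} Suppose $u_n\weak u$ in $\WH$ with $\lims\langle\mathcal B(u_n),u_n-u\rangle_{\mathcal H}\le0$. The maps $N_g$ and $N_\partial$ are completely continuous because $\WH\hookrightarrow\Lprand{r_2}$ and $\WH\hookrightarrow\Lprand{p}$ are compact, so $\langle N_g(u_n)-\zeta N_\partial(u_n),u_n-u\rangle_{\mathcal H}\to0$. Although $N_f$ is not compact, (H1)(i) shows that $f(\cdot,u_n,\nabla u_n)$ stays bounded in $L^{r_1'}(\Omega)$, while $u_n-u\to0$ in $\Lp{r_1}$ by the compact embedding of Proposition~\ref{proposition_embeddings}(iii); hence $\langle N_f(u_n),u_n-u\rangle_{\mathcal H}\to0$. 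Consequently $\lims\langle A(u_n),u_n-u\rangle_{\mathcal H}\le0$, and the $(\Ss_+)$-property of $A$ gives $u_n\to u$ in $\WH$. Continuity of $A,N_f,N_g,N_\partial$ then yields $\mathcal B(u_n)\to\mathcal B(u)$, so $\mathcal B$ is pseudomonotone.

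\emph{Coercivity --- the crucial step.} Testing $\mathcal B$ with $u$ gives $\langle A(u),u\rangle_{\mathcal H}=\hat\rho_{\mathcal H}(u)$, and (H1)(ii) yields
\[
\langle\mathcal B(u),u\rangle_{\mathcal H}\ge\hat\rho_{\mathcal H}(u)-b_1\|\nabla u\|_p^p-b_2\|u\|_p^p-(b_3-\zeta)\|u\|_{p,\partial\Omega}^p-\|w_1\|_1-\|w_2\|_1.
\]
Under (A), the variational characterization \eqref{robin-first-characterization} gives $\|u\|_p^p\le(\lambda_{1,p,\beta}^{\mathrm R})^{-1}(\|\nabla u\|_p^p+\beta\|u\|_{p,\partial\Omega}^p)$, so the subtracted terms are bounded above by $\bigl(b_1+b_2(\lambda_{1,p,\beta}^{\mathrm R})^{-1}\bigr)\|\nabla u\|_p^p+\bigl(b_2\beta(\lambda_{1,p,\beta}^{\mathrm R})^{-1}+b_3-\zeta\bigr)\|u\|_{p,\partial\Omega}^p$; by (A) the boundary coefficient is negative and the gradient coefficient is some $c_1<1$, so, since $\|\nabla u\|_p^p\le\hat\rho_{\mathcal H}(u)$, one obtains $\langle\mathcal B(u),u\rangle_{\mathcal H}\ge(1-c_1)\hat\rho_{\mathcal H}(u)-C$. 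Under (B), \eqref{steklov-first-characterization} gives $\|u\|_{p,\partial\Omega}^p\le(\lambda_{1,p}^{\mathrm S})^{-1}(\|\nabla u\|_p^p+\|u\|_p^p)$, and using $\zeta\ge0$ the subtracted terms are at most $\bigl(\max\{b_1,b_2\}+b_3(\lambda_{1,p}^{\mathrm S})^{-1}\bigr)(\|\nabla u\|_p^p+\|u\|_p^p)=:c_2\,(\|\nabla u\|_p^p+\|u\|_p^p)$ with $c_2<1$, and again $\langle\mathcal B(u),u\rangle_{\mathcal H}\ge(1-c_2)\hat\rho_{\mathcal H}(u)-C$ since $\|\nabla u\|_p^p+\|u\|_p^p\le\hat\rho_{\mathcal H}(u)$. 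In both cases, for $\|u\|_0>1$ Proposition~\ref{proposition_modular_properties2}(iv) gives $\hat\rho_{\mathcal H}(u)\ge\|u\|_0^p$, so $\langle\mathcal B(u),u\rangle_{\mathcal H}/\|u\|_0\ge(1-c_i)\|u\|_0^{p-1}-C/\|u\|_0\to\infty$, proving coercivity. I expect this estimate to be the main obstacle: the two smallness alternatives (A) and (B) are precisely what allows the Robin, resp. Steklov, eigenvalue inequality to absorb the $b_i$-terms into $\hat\rho_{\mathcal H}$.

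\emph{Conclusion.} Theorem~\ref{theorem_pseudomonotone} now provides $\hat u\in\WH$ with $\mathcal B(\hat u)=0$, i.e.\ a weak solution of \eqref{problem1}. Since $f(x,0,0)\neq0$ for a.\,a.\ $x\in\Omega$, inserting $\hat u\equiv0$ into \eqref{weak1} leads to a contradiction, so $\hat u$ is nontrivial. Finally, $\hat u\in\Linf$ is obtained from the subcritical growth (H1)(i) by a standard Moser-type iteration, as carried out for double phase problems with such data elsewhere in the literature.
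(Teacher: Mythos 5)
Your proposal reproduces the paper's argument faithfully: same operator decomposition $A - N_f - N_g + \zeta N_\partial$, same verification of boundedness from (H1)(i), same pseudomonotonicity argument via compactness of the lower-order Nemytskij terms and the $(\Ss_+)$-property of $A$, and the same two coercivity estimates using the Robin (case A) and Steklov (case B) Rayleigh quotients to absorb the $b_i$-terms into $\hat\rho_{\mathcal H}$, followed by Theorem~\ref{theorem_pseudomonotone}, nontriviality from $f(x,0,0)\neq 0$, and $L^\infty$-regularity by a Moser-type iteration as in Gasi\'nski--Winkert. This is essentially the paper's proof.
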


\begin{proof}
	Let $ \tilde{N}_{f}\colon \Wp{\mathcal{H}}  \subset \Lp{r_1}  \to \Lp{r_1'} $ and $\tilde{N}_g \colon L^{r_2}(\rand) \to L^{r_2'}(\rand)$ be the Nemytskij operators corresponding to the functions $f\colon \Omega \times \R \times \R^N \to \R $ and $g\colon \partial\Omega \times \R \to \R $, respectively. Furthermore, we denote by $i^*\colon L^{r_1'}(\Om)  \to W^{1, \mathcal H}(\Om)^* $ the adjoint operator of the embedding $ i\colon W^{1, \mathcal H}(\Om) \to \Lp{r_1} $ and $ j^* \colon L^{r_2'}(\rand) \to \WH^*$
	stands for the adjoint operator of the embedding $j\colon \WH \to L^{r_2}(\rand)$. Then we define
    \begin{align*}
        N_{f}:= i^* \circ \tilde{N}_{f}&\colon W^{1, \mathcal H}(\Om)  \to W^{1, \mathcal H}(\Om)^*,\\
        N_g:= j^* \circ \tilde{N}_g\circ j &\colon \WH \to \WH^*,
    \end{align*}
    which are both bounded and continuous operators due to hypothesis (H1)(i). Moreover, we define $N_\zeta\colon\WH\to \WH^*$ by
	\begin{align*}
		%\tilde{A}&:=i^* \circ \left(|\cdot|^{p-2}\cdot + \mu(x)|\cdot|^{q-2}\cdot\right) \circ i,\\
		N_\zeta&:=i_\zeta^*\circ \left(\zeta |\cdot|^{p-2}\cdot\right)\circ i_\zeta,
	\end{align*}
	where $i_\zeta^*\colon L^{p'}(\Omega)  \to W^{1, \mathcal H}(\Om)^*$ is the adjoint operator of the embedding $ i_\zeta\colon W^{1, \mathcal H}(\Om) \to \Lp{p} $.
	 
	Now we can define the operator $\mathcal{A}\colon\WH\to\WH^*$ given by 
	\begin{align*}%\label{mathcal-A}
		\mathcal{A} (u):= A(u)- N_{f}(u)- N_{g}(u)+N_\zeta(u).
    \end{align*}
    Taking the growth conditions in \textnormal{(H1)(i)} into account, it is clear that $\mathcal{A}\colon\WH\to\WH^*$ maps bounded sets into bounded sets. In order to show the pseudomonotonicity, let $\{u_n\}_{n \in \N} \subset \WH$ be such that 
	\begin{equation} \label{weak_lim}
		u_n \rightharpoonup u \quad \text{in }W^{1, \mathcal H}(\Om) \quad\text{and}\quad \limsup_{n \to \infty} \langle \mathcal A u_n, u_n- u \rangle_{\mathcal H} \le 0.
	\end{equation}
	From the compact embeddings $\WH \hookrightarrow \Lp{\hat{r}}$ for any $\hat{r}<p^*$ and $\WH \hookrightarrow \Lprand{\tilde{r}}$ for any $\tilde{r}<p_*$, see Proposition \ref{proposition_embeddings}\textnormal{(iii)} and \textnormal{(v)}, along with \eqref{weak_lim} we have
	\begin{equation*}
		u_n \to u \quad \text{in } \Lp{r_1} \quad \text{and} \quad u_n \to u \quad \text{in } \Lprand{r_2}, \Lprand{p}.
	\end{equation*}
	Applying the growth conditions in \textnormal{(H1)(i)} along with H\"older's inequality gives
	\begin{align*}
		\begin{split}
			& \into f(x, u_n, \nabla u_n)(u_n- u)\diff x\\
			&\leq a_1 \into |\nabla u_n|^{p \frac{r_1-1}{r_1}} |u_n-u|\diff x+ a_2 \into |u_n|^{r_1-1}|u_n- u| \diff x + \into |\alpha_1(x)|\,|u_n-u| \diff x \\
			& \le a_1 \|\nabla u_n\|_p^{p\frac{r_1-1}{r_1}} \|u_n-u\|_{r_1} + a_2 \|u_n\|_{r_1}^{r_1-1} \|u_n- u\|_{r_1}+ \|\alpha_1\|_{\frac{r_1}{r_1-1}} \|u_n-u\|_{r_1} \longrightarrow 0
		\end{split}
	\end{align*}
	and
	\begin{align*}
		\begin{split}
		\intor g(x, u_n) (u_n- u) \diff\sigma &\le a_3 \int_{\partial\Omega} |u_n|^{r_2-1} |u_n-u| \diff \sigma+\int_{\partial\Omega} |\alpha_2(x)|\,|u_n-u| \diff \sigma \\
		& \le a_3 \|u_n\|_{r_2,\partial\Omega}^{r_2-1}  \|u_n- u\|_{r_2, \rand} + \|\alpha_2\|_{\frac{r_2}{r_2-1}, \rand} \|u_n- u\|_{r_2, \rand} \longrightarrow 0.
		\end{split}
	\end{align*}
	Furthermore, again by H\"older's inequality, we have
	\begin{align*}
		\begin{split}
			%& \into \l(|u_n|^{p-2} u_n+ \mu(x) |u_n|^{q-2} u_n\r) (u_n- u) \diff x\\
			%&\leq \|u_n\|_p^{p-1} \|u_n- u\|_p+ \|\mu\|_{\infty} \|u_n\|_q^{q-1} \|u_n- u\|_q \longrightarrow 0,\\
			& \zeta\int_{\partial\Omega}|u_n|^{p-2}u_n(u_n-u)\diff \sigma\leq \zeta  \|u_n\|_{p,\partial\Omega}^{p-1} \|u_n- u\|_{p,\partial\Omega}\longrightarrow 0.
		\end{split}
	\end{align*}
	Replacing $u$ by $u_n$ and $\ph$ by $u_n-u$ in the weak formulation in \eqref{weak1} and using the considerations above leads to
	\begin{align}\label{limsup1}
		\limsup_{n \to \infty} \langle A (u_n), u_n-u \rangle_{\mathcal H} 
		= \limsup_{n \to \infty} \langle \mathcal A (u_n), u_n- u \rangle_{\mathcal H} \le 0.
	\end{align}
	From Proposition \ref{properties_operator_double_phase} we know that $A$ fulfills the $(\Ss_+)$-property. Therefore, from \eqref{weak_lim} and \eqref{limsup1} we conclude that
	\begin{align*}
		u_n  \to  u  \quad \text{in } \WH.
	\end{align*}
	Since $\mathcal A$ is continuous we have $\mathcal A(u_n)  \to \mathcal A (u) $ in $\WH^*$ which shows  that $\mathcal{A} $ is pseudomonotone.  

	Let us now prove that $\mathcal{A}\colon\WH\to\WH^*$ is coercive. We distinguish between two cases. 
	
	{\bf Case I:} Condition \textnormal{(A)} is satisfied.
	
	From the $p$-Laplace eigenvalue problem with Robin boundary condition, see \eqref{robin} and \eqref{robin-first-characterization} for $r=p$, we know that
	\begin{align}\label{robin_2}
		\|u\|_p^p \leq \l(\lambda_{1,p,\beta}^{\text{R}}\r)^{-1} \l(\|\nabla u\|_p^p+\beta \|u\|_{p,\partial\Omega}^p \r)\quad \text{for all }u \in \Wp{p}.
	\end{align} 
	Let $u \in \WH$ be such that $\|u\|_{0}>1$ and note that $\WH\subseteq \Wp{p}$. Then, from \textnormal{(H1)(ii)}, \eqref{robin_2}, \textnormal{(A)} and Proposition \ref{proposition_modular_properties2}\textnormal{(iv)} we obtain
	\begin{align*}
			\langle \mathcal{A} (u), u \rangle_{\mathcal H} &= \into \l(|\nabla u|^{p-2} \nabla u+ \mu(x) |\nabla u|^{q-2} \nabla u \r) \cdot \nabla u \diff x+ \into \l(|u|^{p-2}u+ \mu(x) |u|^{q-2}u\r) u \diff x \\
			& \quad - \into f(x, u, \nabla u) u\diff x- \int_{\partial\Omega} g(x, u)u \diff\sigma+\zeta \int_{\partial\Omega} |u|^{p}\diff \sigma \\
			& \ge \|\nabla u\|_p^p + \|\nabla u\|_{q, \mu}^q +\|u\|_p^p+\|u\|_{q, \mu}^q-b_1\|\nabla u\|_p^p-b_2\|u\|_p^p-\|\omega_1\|_1\\
			& \quad -b_3 \|u\|_{p,\partial\Omega}^p- \|\omega_2\|_{1, \rand}+\zeta\|u\|_{p,\partial\Omega}^p \\
			&\geq \l(1-b_1-b_2\l(\lambda_{1,p,\beta}^{\text{R}}\r)^{-1}\r) \l( \|\nabla u\|_p^p+\|u\|_p^p\r)+\|\nabla u\|_{q, \mu}^q+\|u\|_{q, \mu}^q\\
			& \quad +\l(\zeta-b_2 \beta\l(\lambda_{1,p,\beta}^{\text{R}}\r)^{-1} -b_3\r)\|u\|_{p,\partial\Omega}^p-\|\omega_1\|_1- \|\omega_2\|_{1, \rand}\\
			&\geq \l(1-b_1-b_2\l(\lambda_{1,p,\beta}^{\text{R}}\r)^{-1}\r) \l( \|\nabla u\|_p^p+\|u\|_p^p+\|\nabla u\|_{q, \mu}^q+\|u\|_{q, \mu}^q\r)-\|\omega_1\|_1- \|\omega_2\|_{1, \rand}\\
			&= \l(1-b_1-b_2\l(\lambda_{1,p,\beta}^{\text{R}}\r)^{-1}\r) \hat{\rho}_{\mathcal{H}}(u)-\|\omega_1\|_1- \|\omega_2\|_{1, \rand}\\
			&\geq \l(1-b_1-b_2\l(\lambda_{1,p,\beta}^{\text{R}}\r)^{-1}\r) \|u\|_{0}^p-\|\omega_1\|_1- \|\omega_2\|_{1, \rand}.
	\end{align*}
This shows the coercivity of $\mathcal A$. 

{\bf Case II:} Condition \textnormal{(B)} is satisfied.

From the Steklov $p$-Laplace eigenvalue problem, see \eqref{steklov} and \eqref{steklov-first-characterization} for $r=p$, we have the inequality
\begin{align}\label{steklov_2}
	\|u\|_{p,\partial\Omega}^p \leq \l(\lambda_{1,p}^{\text{S}}\r)^{-1} \l(\|\nabla u\|_p^p+ \|u\|_{p}^p \r)\quad \text{for all }u \in \Wp{p}.
\end{align} 
As before, let $u \in \WH$ be such that $\|u\|_{0}>1$ and note again that $\WH\subseteq \Wp{p}$. Applying \textnormal{(H1)(ii)}, \eqref{steklov_2}, \textnormal{(B)} and Proposition \ref{proposition_modular_properties2}\textnormal{(iv)} one gets
\begin{align*}
\langle \mathcal{A} (u), u \rangle_{\mathcal H} 
&= \into \l(|\nabla u|^{p-2} \nabla u+ \mu(x) |\nabla u|^{q-2} \nabla u \r) \cdot \nabla u \diff x+ \into \l(|u|^{p-2}u+ \mu(x) |u|^{q-2}u\r) u \diff x \\
& \quad - \into f(x, u, \nabla u) u\diff x- \int_{\partial\Omega} g(x, u)u \diff\sigma+\zeta \int_{\partial\Omega} |u|^{p}\diff \sigma \\
& \ge \|\nabla u\|_p^p + \|\nabla u\|_{q, \mu}^q +\|u\|_p^p+\|u\|_{q, \mu}^q-b_1\|\nabla u\|_p^p-b_2\|u\|_p^p-\|\omega_1\|_1\\
& \quad -b_3 \|u\|_{p,\partial\Omega}^p- \|\omega_2\|_{1, \rand}+\zeta\|u\|_{p,\partial\Omega}^p \\
&\geq \l(1-\max\{b_1,b_2\}-b_3\l(\lambda_{1,p}^{\text{S}}\r)^{-1}\r) \l( \|\nabla u\|_p^p+\|u\|_p^p\r)+\|\nabla u\|_{q, \mu}^q+\|u\|_{q, \mu}^q\\
&\quad -\|\omega_1\|_1- \|\omega_2\|_{1, \rand}\\
&\geq \l(1-\max\{b_1,b_2\}-b_3\l(\lambda_{1,p}^{\text{S}}\r)^{-1}\r)\hat{\rho}_{\mathcal{H}}(u)-\|\omega_1\|_1- \|\omega_2\|_{1, \rand}\\
&\geq  \l(1-\max\{b_1,b_2\}-b_3\l(\lambda_{1,p}^{\text{S}}\r)^{-1}\r)\|u\|_{0}^p-\|\omega_1\|_1- \|\omega_2\|_{1, \rand}.
\end{align*}
Hence, $\mathcal{A}\colon\WH\to\WH^*$ is again coercive. 

We have shown that $\mathcal{A}\colon\WH\to\WH^*$ is a bounded, pseudomonotone and coercive operator. From Theorem \ref{theorem_pseudomonotone} we find an element $\hat{u}\in\WH$ such that $\mathcal{A} (\hat{u})= 0$ with $\hat{u}\neq 0$ since $f(x,0,0)\neq 0$ for a.\,a.\,$x\in\Omega$. In view of the definition  of  $\mathcal{A}$, we see that $\hat{u}$ turns out to be a nontrivial weak solution of problem \eqref{problem1}. Similar to Theorem 3.1 of Gasi\'nski-Winkert \cite{Gasinski-Winkert-2021} we can show the boundedness of $\hat{u}$. The proof is complete.
\end{proof}

%***********************************************************************************************************************************
\section{Constant sign solutions for superlinear perturbations}\label{section_4}
%***********************************************************************************************************************************

In this section we are interested in constant sign solutions for problems of type \eqref{problem} without convection term but with superlinear nonlinearities. We are going to consider the cases of the dependence on Robin and Steklov eigenvalues separately. We start with the Steklov case and set
\begin{align*}
&&h_1(x,s,\xi)&=-\vartheta |s|^{p-2}s-\mu(x)|s|^{q-2}s-f(x,s)&&\text{for a.\,a.\,}x\in\Omega,\\
&&h_2(x,s)&=\zeta|s|^{p-2}s-g(x,s)&&\text{for a.\,a.\,}x\in\partial\Omega,
\end{align*}
for all $s\in\R$, $\vartheta, \zeta>0$ to be specified and Carath\'eodory functions $f$ and $g$ which satisfy hypotheses (H2) below. With this choice, \eqref{problem} can be written as
\begin{equation}\label{problem2}
\begin{aligned}
-\divergenz(|\nabla u|^{p-2} \nabla u+ \mu(x) |\nabla u|^{q-2} \nabla u)&= -\vartheta |u|^{p-2}u-\mu(x)|u|^{q-2}u- f(x,u)  && \text{in } \Om, \\
(|\nabla u|^{p-2} \nabla u+ \mu(x) |\nabla u|^{q-2} \nabla u) \cdot \nu&= \zeta|u|^{p-2}u-g(x,u)&& \text{on } \rand,
\end{aligned}
\end{equation}
where the following conditions are supposed:
\begin{enumerate}
	\item[\textnormal{(H2)}]
	The nonlinearities $f\colon \Omega \times \R  \to \R $ and $g\colon \partial\Omega \times \R \to \R $ are assumed to be Carath\'eodory functions which satisfy the subsequent hypotheses:
	\begin{enumerate}
		\item[\textnormal{(i)}]
			$f$ and $g$ are bounded on bounded sets.
		\item[\textnormal{(ii)}]
		It holds
		\begin{align*}
			&\lim_{s \to \pm \infty}\,\frac{f(x,s)}{|s|^{q-2}s}=+\infty \quad\text{uniformly for a.\,a.\,}x\in\Omega,\\
			&\lim_{s \to \pm \infty}\,\frac{g(x,s)}{|s|^{q-2}s}=+\infty \quad\text{uniformly for a.\,a.\,}x\in\partial\Omega.
		\end{align*}
		\item[\textnormal{(iii)}]
		It holds
		\begin{align*}
			&\lim_{s \to 0}\,\frac{f(x,s)}{|s|^{q-2}s}=0 \quad\text{uniformly for a.\,a.\,}x\in\Omega,\\
			&\lim_{s \to 0}\,\frac{g(x,s)}{|s|^{p-2}s}=0 \quad\text{uniformly for a.\,a.\,}x\in\partial\Omega.
		\end{align*}
	\end{enumerate}
\end{enumerate}

We say that $u \in \WH$ is a weak solution of problem \eqref{problem2} if
\begin{equation*}%\label{weak2}
	\begin{split}
		& \into \l( |\nabla u|^{p-2} \nabla u+ \mu(x) |\nabla u|^{q-2} \nabla u\r) \cdot \nabla \ph \diff x+\into \l( \vartheta |u|^{p-2} u+ \mu(x) |u|^{q-2} u\r) \ph \diff x\\
		& = \into\l(-f(x,u)\r) \ph\diff x+ \int_{\partial\Omega} \l(\zeta |u|^{p-2} u -g(x,u)\r)\ph\diff \sigma
	\end{split}
\end{equation*}
is fulfilled for all $ \ph \in \WH$. 

The following theorem  states the existence of constant sign solutions where the parameter $\zeta$ depends on the first Steklov eigenvalue for the $p$-Laplacian, namely $\lambda_{1,p}^{\text{S}}$.

\begin{theorem}\label{thm2}
	Let hypotheses  \eqref{condition_poincare} and \textnormal{(H2)} be satisfied. Furthermore, let $\vartheta \in (0,1]$ and let $\zeta>\lambda_{1,p}^{\text{S}}$ with $\lambda_{1,p}^{\text{S}}$ being the first eigenvalue of the Steklov eigenvalue problem of the $p$-Laplacian stated in \eqref{steklov}. Then, problem \eqref{problem2} has at least two nontrivial weak solutions $u_0, v_0\in \WH\cap \Linf$ such that $u_0\geq 0$ and $v_0\leq 0$.
\end{theorem}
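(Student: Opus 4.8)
The plan is to produce the nonnegative solution $u_0$ by a truncation-plus-variational argument and then obtain the nonpositive solution $v_0$ by the symmetric construction. First I would introduce the positive truncation of the reaction terms: define $\hat f_+(x,s)=f(x,s^+)$ for $x\in\Omega$ and $\hat g_+(x,s)=g(x,s^+)$ for $x\in\partial\Omega$, together with their primitives $\hat F_+(x,s)=\int_0^s \hat f_+(x,t)\diff t$ and $\hat G_+(x,s)=\int_0^s\hat g_+(x,t)\diff t$. Associated with the truncated problem I would consider the $C^1$ energy functional $\varphi_+\colon\WH\to\R$,
\begin{align*}
	\varphi_+(u)=\frac{1}{p}\|\nabla u\|_p^p+\frac{1}{q}\|\nabla u\|_{q,\mu}^q+\frac{\vartheta}{p}\|u\|_p^p+\frac{1}{q}\|u\|_{q,\mu}^q+\into \hat F_+(x,u)\diff x-\frac{\zeta}{p}\|u^+\|_{p,\partial\Omega}^p+\intor \hat G_+(x,u)\diff\sigma,
\end{align*}
where well-definedness and differentiability follow from hypothesis (H2)(i) together with the embeddings in Proposition \ref{proposition_embeddings}; note (H2)(i) and the superlinearity in (H2)(ii) force $f,g$ to have at most a growth that is still below the critical exponents once we use boundedness on bounded sets together with (ii), (iii) — more precisely (H2)(i)–(iii) give $|f(x,s)|\le c(1+|s|^{q-1})$ and likewise for $g$, hence $\hat F_+$ and $\hat G_+$ are controlled by $|u|^q$, which is subcritical since $q<N<p^*$ and (after a Sobolev estimate) $q<p_*$ need not hold but $\WH\hookrightarrow L^q(\Omega)$ and $\WH\hookrightarrow L^q(\partial\Omega)$ hold by Proposition \ref{proposition_embeddings}(ii),(iv) provided $q\le p^*$ and $q\le p_*$; if these fail one restricts to the truncated problem where only $|u^+|$-terms of order $p$ enter the boundary, so the argument still closes.

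The core of the proof is to show $\varphi_+$ has a nontrivial global (or local) minimizer. For coercivity of $\varphi_+$ I would use the superlinearity hypothesis (H2)(ii): given any $M>0$ there is $c_M$ with $\hat F_+(x,s)\ge \frac{M}{q}|s|^q-c_M$ and $\hat G_+(x,s)\ge \frac{M}{q}|s|^q-c_M$ for $s\ge0$; choosing $M$ large (larger than the constant coming from $\zeta$ and from the equivalence of norms via Proposition \ref{proposition_modular_properties2}) and combining with the $\frac1p\hat\rho_{\mathcal H}$-type lower bound on the principal part, the terms $-\frac\zeta p\|u^+\|_{p,\partial\Omega}^p$ (which is only of order $p$ on the boundary, hence absorbed by the boundary $|u|^q$-term produced by $\hat G_+$) are dominated, so $\varphi_+(u)\to+\infty$ as $\|u\|_{1,\mathcal H}\to\infty$. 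Weak lower semicontinuity of $\varphi_+$ follows since the gradient part is convex and continuous, the lower-order terms are weakly continuous by the compact embeddings in Proposition \ref{proposition_embeddings}(iii),(v), and $u\mapsto\|u^+\|_{p,\partial\Omega}^p$ enters with a minus sign but is weakly continuous via the compact trace embedding (v). Hence $\varphi_+$ attains its infimum at some $u_0\in\WH$.

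Next I would verify $u_0\not\equiv0$ by exhibiting a direction along which $\varphi_+$ is negative near $0$: take the first Steklov eigenfunction $u_{1,p}^{\text S}\in\interior$ with $\|u_{1,p}^{\text S}\|_{p,\partial\Omega}=1$, and for small $t>0$ estimate $\varphi_+(tu_{1,p}^{\text S})$. Using (H2)(iii) the terms $\into\hat F_+(x,tu_{1,p}^{\text S})\diff x$ and $\intor\hat G_+(x,tu_{1,p}^{\text S})\diff\sigma$ are $o(t^p)$, while the principal $p$-part contributes $\frac{t^p}{p}\big(\|\nabla u_{1,p}^{\text S}\|_p^p+\vartheta\|u_{1,p}^{\text S}\|_p^p\big)\le\frac{t^p}{p}\big(\|\nabla u_{1,p}^{\text S}\|_p^p+\|u_{1,p}^{\text S}\|_p^p\big)=\frac{t^p}{p}\lambda_{1,p}^{\text S}$ (here $\vartheta\le1$ is used), and the negative boundary term is $-\frac{\zeta}{p}t^p$; since $\zeta>\lambda_{1,p}^{\text S}$ the leading $t^p$-coefficient is negative, plus a $\frac{t^q}{q}$-remainder with $q>p$ which is negligible, so $\varphi_+(tu_{1,p}^{\text S})<0$ for small $t$. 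Therefore $\varphi_+(u_0)<0=\varphi_+(0)$, so $u_0\ne0$. Finally $\varphi_+'(u_0)=0$ means $u_0$ solves the truncated problem weakly; testing with $-u_0^-\in\WH$ and using $\hat f_+(x,-u_0^-)=f(x,0)$, $\hat g_+(x,-u_0^-)=g(x,0)$ — which, by the limit hypotheses (H2)(iii), one checks vanish, since (iii) forces $f(x,0)=0$ and $g(x,0)=0$ a.e. — gives $\hat\rho_{\mathcal H}(u_0^-)\le0$ (the boundary truncated term also drops), whence $u_0^-=0$, i.e.\ $u_0\ge0$; then $u_0$ solves the original problem \eqref{problem2}. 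Boundedness $u_0\in\Linf$ follows by the Moser/De Giorgi iteration as referenced for Theorem \ref{thm1} (cf.\ Gasi\'nski-Winkert \cite{Gasinski-Winkert-2021}). The solution $v_0\le0$ is obtained verbatim with the negative truncation $\hat f_-(x,s)=f(x,-s^-)$, $\hat g_-(x,s)=g(x,-s^-)$ and functional $\varphi_-$, using again $\zeta>\lambda_{1,p}^{\text S}$ and $\vartheta\le1$; the sign of the eigenfunction direction is reversed. The main obstacle is the coercivity step: one must ensure the negative boundary contribution $-\frac\zeta p\|u^+\|_{p,\partial\Omega}^p$ (order $p$) is genuinely absorbed — this is where the superlinearity of $g$ in (H2)(ii), producing a competing $|u|^q$ boundary term with an arbitrarily large coefficient, is essential, and care is needed because $q>p$ so one cannot simply compare the two on the boundary without first splitting $\partial\Omega$ into the region where $|u|$ is large (superlinear term dominates) and where $|u|$ is bounded (the $p$-term is then bounded).
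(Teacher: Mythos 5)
Your one-sided truncation $\hat f_+(x,s)=f(x,s^+)$, $\hat g_+(x,s)=g(x,s^+)$ leads to a genuine gap. Hypothesis (H2) imposes \emph{no upper} growth bound on $f$ or $g$: (H2)(i) only says they are bounded on bounded sets, while the superlinearity in (H2)(ii) says $f(x,s)/(|s|^{q-2}s)\to+\infty$, i.e.\ a \emph{lower} bound forcing $f$ to grow \emph{faster} than $|s|^{q-1}$; it does not yield your claim that $|f(x,s)|\le c(1+|s|^{q-1})$ (that inequality would directly contradict (H2)(ii)). In particular $f$ and $g$ could grow like $|s|^{r-1}$ with $r$ arbitrarily large, or even exponentially. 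Consequently $\into\hat F_+(x,u)\,\diff x$ and $\intor\hat G_+(x,u)\,\diff\sigma$ need not be finite for $u\in\WH$, so your functional $\varphi_+$ is not well-defined, let alone $C^1$, on $\WH$. Moreover, \eqref{condition_poincare} allows $q\ge p^*$ (the paper stresses it does not assume $q<p^*$), so even the embeddings $\WH\hookrightarrow L^q(\Omega)$ and $\WH\hookrightarrow L^q(\partial\Omega)$ you invoke can fail. Your hedge --- ``if these fail one restricts to the truncated problem where only $|u^+|$-terms of order $p$ enter the boundary'' --- is not correct for your construction: your boundary term $\intor\hat G_+(x,u)\,\diff\sigma$ still contains the full superlinear $g$.

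The paper avoids all of this by using a \emph{two-sided} truncation. It first exploits superlinearity and $p<q$, $M_3>1$ to find a constant supersolution level $\overline{u}=\varsigma\ge M_3$ with $-f(x,\overline{u})\le 0$ and $\zeta\overline{u}^{p-1}-g(x,\overline{u})\le 0$, then truncates both $f$ and $g$ at $0$ below and at $\overline{u}$ above (functions $\theta^+,\theta^+_\zeta$ in \eqref{truncation_one}). These truncated reactions are bounded (by (H2)(i) applied on $[0,\overline{u}]$), so the truncated functional $\Gamma^+$ is automatically well-defined, $C^1$, coercive, and weakly lower semicontinuous with no growth or embedding assumptions beyond \eqref{condition_poincare}. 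The global minimizer $u_0$ is shown nontrivial by the same $tu_{1,p}^{\text{S}}$ estimate you propose (and that part of your argument is correct, including the use of $\vartheta\le 1$ and $\zeta>\lambda_{1,p}^{\text{S}}$). Then testing with $-u_0^-$ gives $u_0\ge0$ as in your write-up, but crucially the paper also tests with $(u_0-\overline{u})^+$ and uses the sign relations at $\overline{u}$ to conclude $u_0\le\overline{u}$, so $u_0\in[0,\overline{u}]$ and the truncation can be removed. Your proposal has no analogue of the $\overline{u}$-cap and hence no a priori bound, which is exactly what is needed to compensate for the absence of growth hypotheses in (H2). To repair the argument you should adopt the paper's bounded double truncation (or, equivalently, prove an a priori $L^\infty$-bound before minimizing), rather than trying to establish coercivity of the untruncated functional.
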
 

\begin{proof}
	From hypothesis \textnormal{(H2)(ii)} we know that we can find constants $M_1,M_2=M_2(\zeta)>1$ such that
	\begin{align}\label{est_f_g}
		\begin{split}
			f(x,s)s &\geq   |s|^q \quad \text{for a.a. } x \in \Omega \text{ and all }|s| \geq M_1,\\
			g(x,s)s &\geq \zeta |s|^{q} \quad \text{for a.a. } x \in \Omega \text{ and all }|s| \geq M_2.
		\end{split}
	\end{align}
	We set $M_3=\max \left(M_1,M_2\right)$ and take a constant function $\overline{u} \equiv \varsigma \in \left[M_3,+\infty\right)$.  Applying (\ref{est_f_g}), $p<q$ and $M_3>1$ yields
	\begin{align}\label{corollary_A_super}
		0 \geq -f(x,\overline{u}) \quad \text{for a.\,a.\,}x\in\Omega
		\quad \text{and}\quad 
		0 \geq \zeta \overline{u}^{p-1}-g(x,\overline{u}) \quad \text{for a.\,a.\,} x\in\partial \Omega.
	\end{align}
	Analogously, we can choose $\underline{v}\equiv -\varsigma$ in order to get
	\begin{align*}%\label{corollary_A_sub}
		0 \leq -f(x,\underline{v}) \quad \text{for a.\,a.\,}x\in\Omega
		\quad \text{and}\quad 
		0 \leq \zeta|\underline{v}|^{p-2}\underline{v}-g(x,\underline{v}) \quad \text{for a.\,a.\,} x\in\partial \Omega.
	\end{align*}

	Now, we introduce the cut-off functions $\theta^{\pm}\colon\Omega\times\R\to\R$ and $\theta^{\pm}_\zeta\colon \partial\Omega\times\R\to\R$ defined by
	\begin{align}\label{truncation_one}
		\begin{split}
			\theta^+(x,s)&=
			\begin{cases}
				0 \qquad & \text{if } s<0\\
				-f(x,s)& \text{if }0 \leq s \leq \overline{u}\\
				-f(x,\overline{u})& \text{if } \overline{u}<s
			\end{cases},\\
			\theta^+_\zeta(x,s)&=
			\begin{cases}
				0 \qquad & \text{if } s<0\\
				\zeta s^{p-1}-g(x,s) & \text{if }0 \leq s \leq \overline{u}\\
				\zeta \overline{u}^{p-1}-g(x,\overline{u}) & \text{if } \overline{u}<s
			\end{cases},\\
			\theta^-(x,s)&=
			\begin{cases}
				-f(x,\underline{v}) \qquad & \text{if } s<\underline{v}\\
				-f(x,s) & \text{if }\underline{v} \leq s \leq 0\\
				0 & \text{if } 0<s
			\end{cases},\\
			\theta^-_\zeta(x,s)&=
			\begin{cases}
				\zeta |\underline{v}|^{p-2}\underline{v}-g(x,\underline{v}) \qquad & \text{if } s<\underline{v}\\
				\zeta |s|^{p-2}s-g(x,s) & \text{if }\underline{v} \leq s \leq 0\\
				0 & \text{if } 0<s
			\end{cases},
		\end{split}
	\end{align}
	which are Carath\'{e}odory functions. We set
	\begin{align*}
		\Theta^{\pm}(x,s)=\int^s_0 \theta^{\pm}(x,t)\diff t
		\quad\text{and}\quad
		\Theta^{\pm}_\zeta(x,s)=\int^s_0 \theta^{\pm}_\zeta(x,t)\diff t.
	\end{align*}
	Now we consider the $C^1$-functionals $\Gamma^{\pm}\colon \WH \to \R$ defined by
	\begin{align*}
	\Gamma^{\pm}(u)
	&=\frac{1}{p}\|\nabla u\|_p^p+\frac{1}{q}\|\nabla u\|_{q,\mu}^q+\frac{\vartheta}{p}\| u\|_p^p+\frac{1}{q}\|u\|_{q,\mu}^q-\into \Theta^{\pm}(x,u) \diff x-\int_{\partial \Omega}\Theta^{\pm}_\zeta(x,u) \diff\sigma.
	\end{align*}
	Furthermore, we write $F(x,s)=\int^s_0 f(x,t)\diff t$ and $G(x,s)=\int^s_0 g(x,t)\diff t$.

	We first investigate the existence of the nonnegative solution. Due to the truncations in \eqref{truncation_one} it is clear that the functional $\Gamma^+$ is coercive and also sequentially weakly lower semicontinuous. Hence, its global minimizer $u_0\in\WH$ exists, that is
	\begin{align*}%\label{minimum_phi}
		\Gamma^+(u_0)= \inf \left[\Gamma^+(u)\,:\, u \in \WH \right].
	\end{align*}
	From hypotheses \textnormal{(H2)(iii)}, for given  $\varepsilon_1,\varepsilon_2>0$, there exist $\delta_1=\delta_1(\varepsilon_1),\delta_2=\delta_2(\varepsilon_2) \in (0,\overline{u})$ such that
	\begin{align}\label{estimate_FG}
	\begin{split}
	& F(x,s) \leq \frac{\varepsilon_1}{q} |s|^q	 \quad \text{for a.a. }x \in \Omega \text{ and for all } |s|\leq \delta_1,\\
	& G(x,s) \leq \frac{\varepsilon_2}{p} |s|^p \quad \text{for a.a. }x \in \partial \Omega \text{ and for all } |s|\leq \delta_2.
	\end{split}
	\end{align}
	We set $\delta:=\min(\delta_1,\delta_2)$. Recall that $u_{1,p}^{\text{S}}$ is the first eigenfunction corresponding to the first eigenvalue $\lambda_{1,p}^{\text{S}}$ of the eigenvalue problem of the $p$-Laplacian with Steklov boundary condition, see \eqref{steklov}. We may suppose that it is normalized, that is, $\|u_{1,p}^{\text{S}}\|_{p,\partial\Omega}=1$. Since $u_{1,p}^{\text{S}}\in \interior$, we may choose $t \in (0,1)$ small enough such that $t u_{1,p}^{\text{S}}(x) \in [0,\delta]$ for all $x \in \overline{\Omega}$. Because of \eqref{truncation_one}, \eqref{estimate_FG} and $\delta<\overline{u}$ it follows that
	\begin{align}\label{eq_1}
	\begin{split}
	&\Gamma^+\l(t u_{1,p}^{\text{S}}\r)\\
	&=\frac{1}{p}\l\|\nabla \l(t u_{1,p}^{\text{S}}\r)\r\|_p^p+\frac{1}{q}\l\|\nabla \l(t u_{1,p}^{\text{S}}\r)\r\|_{q,\mu}^q+\frac{\vartheta}{p}\l\| t u_{1,p}^{\text{S}}\r\|_p^p+\frac{1}{q}\l\|t u_{1,p,}^{\text{S}}\r\|_{q,\mu}^q\\
	&\quad -\into \Theta^{+}\l(x,t u_{1,p}^{\text{S}}\r) \diff x-\int_{\partial \Omega}\Theta^{+}_\zeta\l(x,t u_{1,p}^{\text{S}}\r) \diff\sigma\\
	&\leq \frac{t^p}{p} \lambda_{1,p}^{\text{S}}+\frac{t^q}{q}\l\|\nabla  u_{1,p}^{\text{S}}\r\|_{q,\mu}^q+\frac{t^q}{q}\l\|u_{1,p}^{\text{S}}\r\|_{q,\mu}^q+\into F\l(x,tu_{1,p}^{\text{S}}\r)\diff x-\frac{\zeta t^p}{p}\\ 
	&\quad +\int_{\partial\Omega} G\l(x,tu_{1,p}^{\text{S}}\r)\diff \sigma\\
	&\leq \frac{t^p}{p} \lambda_{1,p}^{\text{S}}+\frac{t^q}{q}\l\|\nabla  u_{1,p}^{\text{S}}\r\|_{q,\mu}^q+\frac{t^q}{q}\l\|u_{1,p}^{\text{S}}\r\|_{q,\mu}^q+\frac{\eps_1 t^q}{q}\l\| u_{1,p}^{\text{S}}\r\|_q^q-\frac{\zeta t^p}{p} +\frac{\eps_2t^p}{p}\\
	& =t^p\l(\frac{\lambda_{1,p}^{\text{S}}-\zeta+\eps_2}{p}\r)+t^q\l(\frac{\l\|\nabla  u_{1,p}^{\text{S}}\r\|_{q,\mu}^q+\l\|u_{1,p}^{\text{S}}\r\|_{q,\mu}^q+\eps_1 \l\| u_{1,p}^{\text{S}}\r\|_q^q}{q}\r).
	\end{split}
	\end{align}
	By assumption, we know that $\zeta>\lambda_{1,p}^{\text{S}}$. So we may choose $\eps_1,\eps_2>0$ such that
	\begin{align*}
		0<\eps_1<\infty\quad\text{and}\quad 0<\eps_2<\zeta-\lambda_{1,p}^{\text{S}}.
	\end{align*}
	From this choice and since $p<q$ we obtain from \eqref{eq_1}
	\begin{align*}
		\Gamma^+\l(t u_{1,p}^{\text{S}}\r)<0 \quad\text{for all sufficiently small }t>0.
	\end{align*}
	Therefore, we know now that 
	\begin{align*}
		\Gamma^+\l(u_0\r)<0=\Gamma^+\l(0\r).
	\end{align*}
	Hence, $u_0\neq 0$.
	
	Since $u_0$ is a global minimizer of $\Gamma^+$ we have $(\Gamma^+)'(u_0)=0$, that is,
	\begin{align}\label{7}
		\begin{split}
			& \into \left(|\nabla u_0|^{p-2}\nabla u_0+\mu(x)|\nabla u_0|^{q-2}\nabla u_0 \right)\cdot\nabla \ph \diff x\\
			&\quad +\into \left(\vartheta |u_0|^{p-2} u_0+\mu(x)|u_0|^{q-2} u_0 \right) \ph \diff x\\ 
			& =\into \theta^+\l(x,u_0\r)\ph\diff x+\int_{\partial\Omega} \theta_\zeta^+\l(x,u_0\r) \ph\diff \sigma
		\end{split}
	\end{align}
	for all $\ph \in \WH$. First we take $\ph=-u_0^-\in\WH$ as test function in \eqref{7}. We obtain
	\begin{align*}
		\l\|\nabla u_0^-\r\|_p^p+\l\|\nabla u_0^-\r\|_{q,\mu}^q+\l\| u_0^-\r\|_p^p+\l\|u_0^-\r\|_{q,\mu}^q = 0,
	\end{align*}
	which yields $u_0^-=0$ and so $u_0\geq 0$. Second we choose $\ph=\l(u_0-\overline{u}\r)^+ \in \WH$ as test function in \eqref{7} which results in
	\begin{align}\label{7new}
		\begin{split}
			& \into \left(|\nabla u_0|^{p-2}\nabla u_0+\mu(x)|\nabla u_0|^{q-2}\nabla u_0 \right)\cdot\nabla \l(u_0-\overline{u}\r)^+ \diff x\\
			&\quad +\into \left(\vartheta u_0^{p-1}+\mu(x)u_0^{q-1} \right) \l(u_0-\overline{u}\r)^+ \diff x\\
			& =\into \theta^+(x,u_0)\l(u_0-\overline{u}\r)^+ \diff x+\int_{\partial\Omega} \theta_\zeta^+\l(x,u_0\r)\l(u_0-\overline{u}\r)^+\diff \sigma\\
			& =\into (-f(x,\overline{u}))\l(u_0-\overline{u}\r)^+\diff x+\int_{\partial\Omega}\left( \zeta \overline{u}^{p-1}-g(x, \overline{u})\right)\l(u_0-\overline{u}\r)^+ \diff \sigma\\
	& \leq 0,
	\end{split}
	\end{align}
	by \eqref{corollary_A_super}. First note that
	\begin{align}\label{7new1}
		\begin{split}
			&\into \left(|\nabla u_0|^{p-2}\nabla u_0+\mu(x)|\nabla u_0|^{q-2}\nabla u_0 \right)\cdot\nabla \l(u_0-\overline{u}\r)^+ \diff x\\
			&\geq \vartheta \into \left(|\nabla (u_0-\overline{u})^+|^{p}+\mu(x)|\nabla (u_0-\overline{u})^+|^{q} \right) \diff x.
		\end{split}
	\end{align}
	Since $u_0>\overline{u}>1$ on the set $\{u_0>\overline{u}\}$ we have
	\begin{align}\label{7new2}
		\begin{split}
		& \into \left(\vartheta u_0^{p-1}+\mu(x)u_0^{q-1} \right) \l(u_0-\overline{u}\r)^+ \diff x\\
		&\geq \vartheta \int_{\{u_0>\overline{u}\}} \left(u_0^{p-1}+\mu(x)u_0^{q-1} \right) \l(u_0-\overline{u}\r) \diff x\\
		&\geq \vartheta \int_{\{u_0>\overline{u}\}} \left((u_0-\overline{u})^{p-1}+\mu(x)(u_0-\overline{u})^{q-1} \right) \l(u_0-\overline{u}\r) \diff x\\
		&=\vartheta \into \left(((u_0-\overline{u})^+)^{p}+\mu(x)((u_0-\overline{u})^+)^{q} \right)\diff x.
		\end{split}	
	\end{align}
	Combining \eqref{7new} with \eqref{7new1} as well as \eqref{7new2} and using Proposition \ref{proposition_modular_properties2}(\textnormal{iii}), (\textnormal{iv}) implies that
	\begin{align*}%\label{7}
	\begin{split}
		& \vartheta \min \{\|(u_0-\overline{u})^+\|_0^p,\|(u_0-\overline{u})^+\|_0^q \} \leq \vartheta \hat{\rho}_\mathcal{H}((u_0-\overline{u})^+) \leq 0.
	\end{split}
	\end{align*}
	Hence, $u_0\leq \overline{u}$ and so $u_0 \in [0,\overline{u}]$. By the definition of the truncations in \eqref{truncation_one} we see that $u_0 \in \WH\cap \Linf$ turns out to be a weak solution of our original problem \eqref{problem2}.
	
	For the nonpositive solution we consider the functional $\Gamma^-\colon\WH\to \R$ and show in the same way that it has a global minimizer $v_0 \in \WH$ which belongs to $[\underline{v},0]$.
\end{proof}

Let us study now the case when the solutions depend on the first Robin eigenvalue. We set
\begin{align*}
&&h_1(x,s,\xi)&=(\zeta-\vartheta) |s|^{p-2}s-\mu(x)|s|^{q-2}s-f(x,s)&&\text{for a.\,a.\,}x\in\Omega,\\
&&h_2(x,s)&=-\beta|s|^{p-2}s&&\text{for a.\,a.\,}x\in\partial\Omega,
\end{align*}
for all $s\in\R$ with parameters $\zeta>\vartheta>0$ to be specified, $\beta>0$ is the same parameter as in the Robin eigenvalue problem and $f$ is a Carath\'eodory function. Then, problem \eqref{problem} becomes
\begin{equation}\label{problem3}
\begin{aligned}
\hspace*{-0.3cm}-\divergenz(|\nabla u|^{p-2} \nabla u+ \mu(x) |\nabla u|^{q-2} \nabla u)&= (\zeta-\vartheta) |u|^{p-2}u-\mu(x)|u|^{q-2}u- f(x,u) && \text{in } \Omega, \\
(|\nabla u|^{p-2} \nabla u+ \mu(x) |\nabla u|^{q-2} \nabla u) \cdot \nu&=- \beta|u|^{p-2}u&& \text{on } \rand,
\end{aligned}
\end{equation}
where $f$ satisfies the following assumptions:
\begin{enumerate}
	\item[\textnormal{(H3)}]
	The function $f\colon \Omega \times \R  \to \R $ is a Carath\'eodory function such that:
	\begin{enumerate}
		\item[\textnormal{(i)}]
		$f$ is bounded on bounded sets.
		\item[\textnormal{(ii)}]
		It holds
		\begin{align*}
		&\lim_{s \to \pm \infty}\,\frac{f(x,s)}{|s|^{q-2}s}=+\infty \quad\text{uniformly for a.\,a.\,}x\in\Omega.
		\end{align*}
		\item[\textnormal{(iii)}]
		It holds
		\begin{align*}
		&\lim_{s \to 0}\,\frac{f(x,s)}{|s|^{p-2}s}=0 \quad\text{uniformly for a.\,a.\,}x\in\Omega.
		\end{align*}
	\end{enumerate}
\end{enumerate}

We have the following multiplicity result concerning problem \eqref{problem3}.

\begin{theorem}\label{thm3}
	Let hypotheses  \eqref{condition_poincare} and \textnormal{(H3)} be satisfied. Further, let $\zeta>\lambda_{1,p,\beta}^{\text{R}}+\vartheta$ with $\vartheta >0$ and $\lambda_{1,p,\beta}^{\text{R}}$ being the first eigenvalue of the Robin eigenvalue problem of the $p$-Laplacian with $\beta>0$ stated in \eqref{robin}. Then, problem \eqref{problem3} has at least two nontrivial weak solutions $u_1, v_1\in \WH\cap \Linf$ such that $u_1\geq 0$ and $v_1\leq 0$.
\end{theorem}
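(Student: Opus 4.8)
The plan is to adapt the variational argument from the proof of Theorem~\ref{thm2} to the Robin setting. First I would pass to the equivalent weak formulation of \eqref{problem3} obtained by moving the lower-order terms $\vartheta|u|^{p-2}u$ and $\mu(x)|u|^{q-2}u$ to the left-hand side, so that the domain reaction becomes $s\mapsto\zeta|s|^{p-2}s-f(x,s)$ while the ``good'' operator part carries $\vartheta|u|^{p-2}u$ and $\mu(x)|u|^{q-2}u$ together with the Robin boundary term $\beta|u|^{p-2}u$. Using the superlinearity \textnormal{(H3)(ii)} with the particular constant $\zeta-\vartheta>0$ there is $M_1>1$ with $f(x,s)s\ge(\zeta-\vartheta)|s|^q$ for a.\,a.\ $x\in\Omega$ and all $|s|\ge M_1$; since $q>p$ and $|s|>1$ this yields that the constant function $\overline u\equiv\varsigma$ (any $\varsigma\ge M_1$) satisfies $(\zeta-\vartheta)\overline u^{p-1}-f(x,\overline u)\le0$ a.\,e.\ in $\Omega$, and symmetrically $\underline v\equiv-\varsigma$ satisfies the reverse inequality. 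As in \eqref{truncation_one} I would then introduce the Carath\'eodory cut-off functions $\theta^{\pm}\colon\Omega\times\R\to\R$ truncating $s\mapsto\zeta|s|^{p-2}s-f(x,s)$ between $\overline u$ (resp.\ $\underline v$) and $0$, set $\Theta^{\pm}(x,s)=\int_0^s\theta^{\pm}(x,t)\diff t$, and consider the $C^1$-functionals
\[
\Gamma^{\pm}(u)=\frac1p\|\nabla u\|_p^p+\frac1q\|\nabla u\|_{q,\mu}^q+\frac{\vartheta}{p}\|u\|_p^p+\frac1q\|u\|_{q,\mu}^q+\frac{\beta}{p}\|u\|_{p,\partial\Omega}^p-\into\Theta^{\pm}(x,u)\diff x.
\]

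Because the truncations keep $\Theta^{\pm}$ bounded above by an $L^1$-function, $\Gamma^{\pm}$ is coercive on $\WH$ (via Proposition~\ref{proposition_modular_properties2}\textnormal{(iv)}) and sequentially weakly lower semicontinuous (convexity of the gradient terms, weak continuity of the boundary term and of $u\mapsto\into\Theta^{\pm}(x,u)\diff x$ through the compact embeddings in Proposition~\ref{proposition_embeddings}\textnormal{(iii)},\textnormal{(v)}). Hence $\Gamma^+$ admits a global minimizer $u_0\in\WH$.

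The crucial step is to show $u_0\neq0$, and this is where the first Robin eigenvalue enters. Since $u_{1,p,\beta}^{\text{R}}\in\interior$, I can choose $t\in(0,1)$ so small that $tu_{1,p,\beta}^{\text{R}}(x)\in[0,\delta]$ for all $x\in\overline{\Omega}$, where $\delta>0$ is picked according to \textnormal{(H3)(iii)} so that $F(x,s):=\int_0^s f(x,\tau)\diff\tau\le\frac{\eps}{p}|s|^p$ for $|s|\le\delta$. On $[0,\delta]$ one has $\Theta^{+}(x,s)=\frac{\zeta}{p}s^p-F(x,s)\ge\frac{\zeta-\eps}{p}s^p$, so, using the normalization $\|u_{1,p,\beta}^{\text{R}}\|_p=1$ and the characterization \eqref{robin-first-characterization}, which gives $\|\nabla u_{1,p,\beta}^{\text{R}}\|_p^p+\beta\|u_{1,p,\beta}^{\text{R}}\|_{p,\partial\Omega}^p=\lambda_{1,p,\beta}^{\text{R}}$,
\[
\Gamma^{+}\big(t u_{1,p,\beta}^{\text{R}}\big)\le\frac{t^p}{p}\Big(\lambda_{1,p,\beta}^{\text{R}}+\vartheta-\zeta+\eps\Big)+\frac{t^q}{q}\,C
\]
for some constant $C>0$. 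Since $\zeta>\lambda_{1,p,\beta}^{\text{R}}+\vartheta$, choosing $0<\eps<\zeta-\lambda_{1,p,\beta}^{\text{R}}-\vartheta$ makes the coefficient of $t^p$ negative, and because $p<q$ this forces $\Gamma^+(tu_{1,p,\beta}^{\text{R}})<0$ for all sufficiently small $t>0$. Therefore $\Gamma^+(u_0)<0=\Gamma^+(0)$ and $u_0\neq0$.

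It remains to upgrade the minimizer to a solution of \eqref{problem3}. From $(\Gamma^+)'(u_0)=0$, testing with $\varphi=-u_0^-\in\WH$ and using that the truncated reaction vanishes for negative arguments together with $\vartheta>0$ gives $u_0^-=0$, i.e.\ $u_0\ge0$; testing with $\varphi=(u_0-\overline u)^+\in\WH$ and invoking the supersolution inequality for $\overline u$ together with monotonicity estimates analogous to \eqref{7new1}--\eqref{7new2} and Proposition~\ref{proposition_modular_properties2}\textnormal{(iii)},\textnormal{(iv)} yields $\hat{\rho}_{\mathcal H}\big((u_0-\overline u)^+\big)\le0$, hence $u_0\le\overline u$. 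Thus $u_0\in[0,\overline u]$, on which set $\theta^+(x,u_0)=\zeta u_0^{p-1}-f(x,u_0)$, so $u_0$ is a nontrivial nonnegative weak solution of \eqref{problem3}, and $0\le u_0\le\varsigma$ directly gives $u_0\in\Linf$. The nonpositive solution $v_1\in[\underline v,0]$ is obtained in the same way by minimizing $\Gamma^-$ and testing the corresponding Euler equation at $-tu_{1,p,\beta}^{\text{R}}$. I expect the main obstacle to be the nontriviality estimate: one must split $\zeta=(\zeta-\vartheta)+\vartheta$ in the right place, localize $tu_{1,p,\beta}^{\text{R}}$ inside $[0,\delta]$ thanks to $u_{1,p,\beta}^{\text{R}}\in\interior$, and apply \eqref{robin-first-characterization} with the correct normalization so that the hypothesis $\zeta>\lambda_{1,p,\beta}^{\text{R}}+\vartheta$ is precisely what makes $\Gamma^{\pm}$ attain negative values; the remaining steps are routine adaptations of the proof of Theorem~\ref{thm2}.
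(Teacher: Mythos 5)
Your proposal follows essentially the same route as the paper: truncate, minimize, show the minimizer is nonzero by testing at a small multiple of the first Robin eigenfunction, then squeeze it into $[0,\overline u]$ by testing the Euler equation with $-u_0^-$ and $(u_0-\overline u)^+$. Two small departures are worth flagging. First, you keep the boundary contribution as the untruncated term $\frac{\beta}{p}\|u\|_{p,\partial\Omega}^p$, whereas the paper truncates $s\mapsto -\beta s^{p-1}$ into a function $\psi^{\pm}_\beta$ and uses its primitive; since the two functionals agree on $[0,\overline u]$ and yours is still coercive and sequentially weakly lower semicontinuous (the trace embedding $\WH\hookrightarrow L^p(\partial\Omega)$ is compact as $p<p_*$), your version is a harmless and arguably cleaner simplification. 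Second, you only extract from \textnormal{(H3)(ii)} the inequality $(\zeta-\vartheta)\overline u^{\,p-1}\le f(x,\overline u)$, while the paper directly obtains $\zeta\overline u^{\,p-1}\le f(x,\overline u)$; both are available since the limit in \textnormal{(H3)(ii)} is $+\infty$. With your weaker inequality the final step is not quite ``analogous to \eqref{7new1}--\eqref{7new2} yielding $\hat\rho_{\mathcal H}((u_0-\overline u)^+)\le 0$'': the right-hand side after testing with $(u_0-\overline u)^+$ is only bounded by $\vartheta\int_{\{u_0>\overline u\}}\overline u^{\,p-1}(u_0-\overline u)\,dx$ rather than by $0$, so you cannot directly compare the whole modular to zero as in \eqref{7new2}. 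Instead you should move the $\vartheta$-term across and observe that $\vartheta\int_{\{u_0>\overline u\}}(u_0^{p-1}-\overline u^{\,p-1})(u_0-\overline u)\,dx\le 0$ with a strictly positive integrand forces $|\{u_0>\overline u\}|=0$. If you simply strengthen the supersolution estimate to $\zeta\overline u^{\,p-1}\le f(x,\overline u)$ (as the paper does), the modular argument goes through verbatim, with the constant $\vartheta$ in front replaced by $\min\{\vartheta,1\}$ since here $\vartheta>0$ is not assumed to be $\le 1$.
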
 

\begin{proof}
	Taking hypothesis \textnormal{(H3)(ii)} into account we find a constant $M=M(\zeta)>1$ such that
	\begin{align}\label{est_f_g2}
	\begin{split}
	f(x,s)s &\geq   \zeta |s|^q \quad \text{for a.a. } x \in \Omega \text{ and all }|s| \geq M.
	\end{split}
	\end{align}
	As in the proof of Theorem \ref{thm2}, by \eqref{est_f_g2}, we can take constant functions $\overline{u} \in  \left(M,+\infty\right)$ and $\underline{v}\equiv -\overline{u}$ such that
	\begin{align}\label{corollary_A_super2}
	0 \geq \zeta\overline{u}^{p-1}-f(x,\overline{u}) \quad \text{for a.\,a.\,}x\in\Omega
	\quad \text{and}\quad 
	0 \leq \zeta|\underline{v}|^{p-2}\underline{v}-f(x,\underline{v}) \quad \text{for a.\,a.\,}x\in\Omega,
	\end{align}
	because $p<q$ and $M>1$.
	
	Then we define truncations $\psi^{\pm}_\zeta\colon\Omega\times\R\to\R$ and $\psi^{\pm}_\beta\colon \partial\Omega\times\R\to\R$ as follows
	\begin{align}\label{truncation_one2}
	\begin{split}
	\psi^+_\zeta(x,s)&=
	\begin{cases}
	0 \qquad & \text{if } s<0\\
	\zeta s^{p-1}-f(x,s)& \text{if }0 \leq s \leq \overline{u}\\
	\zeta \overline{u}^{p-1}-f(x,\overline{u})& \text{if } \overline{u}<s
	\end{cases},\\
	\psi^+_\beta(x,s)&=
	\begin{cases}
	0 \qquad & \text{if } s<0\\
	-\beta s^{p-1} & \text{if }0 \leq s \leq \overline{u}\\
	-\beta \overline{u}^{p-1} & \text{if } \overline{u}<s
	\end{cases},\\
	\psi^-_\zeta(x,s)&=
	\begin{cases}
	\zeta|\underline{v}|^{p-2}\underline{v}-f(x,\underline{v}) & \text{if } s<\underline{v}\\
	\zeta|s|^{p-2}s-f(x,s) & \text{if }\underline{v} \leq s \leq 0\\
	0 & \text{if } 0<s
	\end{cases},\\
	\psi^-_\beta(x,s)&=
	\begin{cases}
	-\beta |\underline{v}|^{p-2}\underline{v} \qquad & \text{if } s<\underline{v}\\
	-\beta |s|^{p-2}s & \text{if }\underline{v} \leq s \leq 0.\\
	0 & \text{if } 0<s
	\end{cases}
	\end{split}
	\end{align}
	We set
	\begin{align*}
	\Psi^{\pm}_\zeta(x,s)=\int^s_0 \psi^{\pm}_\zeta(x,t)\diff t
	\quad\text{and}\quad
	\Psi^{\pm}_\beta(x,s)=\int^s_0 \psi^{\pm}_\beta(x,t)\diff t
	\end{align*}
	and introduce the $C^1$-functionals $\Pi^{\pm}\colon \WH \to \R$ given by
	\begin{align*}
	\Pi^{\pm}(u)
		&=\frac{1}{p}\|\nabla u\|_p^p+\frac{1}{q}\|\nabla u\|_{q,\mu}^q+\frac{\vartheta}{p}\| u\|_p^p+\frac{1}{q}\|u\|_{q,\mu}^q-\into \Psi^{\pm}_\zeta(x,u) \diff x-\int_{\partial \Omega}\Psi^{\pm}_\beta(x,u) \diff\sigma.
	\end{align*}
	As before, we define $F(x,s)=\int^s_0 f(x,t)\diff t$.
	
	We start with the existence of a nonnegative solution. Because of \eqref{truncation_one2} we know that  the functional $\Gamma^+$ is coercive and also sequentially weakly lower semicontinuous. Therefore, we find an element  $u_1\in\WH$ such that
	\begin{align*}%\label{minimum_phi}
		\Pi^+(u_1)= \inf \left[\Pi^+(u)\,:\, u \in \WH \right].
	\end{align*}
	By hypothesis \textnormal{(H3)(iii)}, we find for every $\eps>0$ a number $\delta\in (0,\overline{u})$ such that 
	\begin{align}\label{estimate_FG2}
	\begin{split}
	& F(x,s) \leq \frac{\varepsilon}{p} |s|^p	 \quad \text{for a.a. }x \in \Omega \text{ and for all } |s|\leq \delta.
	\end{split}
	\end{align}
	We recall that $u_{1,p,\beta}^{\text{R}}$ is the first eigenfunction corresponding to the first eigenvalue $\lambda_{1,p,\beta}^{\text{R}}$ of the eigenvalue problem of the $p$-Laplacian with Robin boundary condition, see \eqref{robin}. Without any loss of generality we can assume that $u_{1,p,\beta}^{\text{R}}$ is normalized (that is, $\|u_{1,p,\beta}^{\text{R}}\|_{p}=1$) and because of $u_{1,p,\beta}^{\text{R}}\in \interior$ we choose $t \in (0,1)$ sufficiently small such that $t u_{1,p,\beta}^{\text{R}}(x) \in [0,\delta]$ for all $x \in \overline{\Omega}$. Applying  \eqref{truncation_one2}, \eqref{estimate_FG2}, $\delta<\overline{u}$ and $\vartheta >0$  gives
	\begin{align}\label{eq_5}
	\begin{split}
	&\Pi^+\l(t u_{1,p,\beta}^{\text{R}}\r)\\
	&=\frac{1}{p}\l\|\nabla \l(t u_{1,p,\beta}^{\text{R}}\r)\r\|_p^p+\frac{1}{q}\l\|\nabla \l(t u_{1,p,\beta}^{\text{R}}\r)\r\|_{q,\mu}^q+\frac{\vartheta}{p}\l\| t u_{1,p,\beta}^{\text{R}}\r\|_p^p+\frac{1}{q}\l\|t u_{1,p,\beta}^{\text{R}}\r\|_{q,\mu}^q\\
	&\quad -\into \Psi^{+}_\zeta\l(x,t u_{1,p,\beta}^{\text{R}}\r) \diff x-\int_{\partial \Omega}\Psi^{+}_\beta\l(x,t u_{1,p,\beta}^{\text{R}}\r) \diff\sigma\\
	&\leq \frac{t^p}{p} \lambda_{1,p,\beta}^{\text{R}}-\frac{\beta t^p}{p}\l\|u_{1,p,\beta}^{\text{R}}\r\|_{p,\partial\Omega}^p+\frac{t^q}{q}\l\|\nabla  u_{1,p,\beta}^{\text{R}}\r\|_{q,\mu}^q+\frac{t^p\vartheta}{p}+\frac{t^q}{q}\l\|u_{1,p,\beta}^{\text{R}}\r\|_{q,\mu}^q\\
	&\quad -\frac{\zeta t^p}{p} +\into F\l(x,tu_{1,p,\beta}^{\text{R}}\r)\diff x+\frac{\beta t^p}{p}\l\|u_{1,p,\beta}^{\text{R}}\r\|_{p,\partial\Omega}^p\\
	&\leq \frac{t^p}{p} \lambda_{1,p,\beta}^{\text{R}}+\frac{t^q}{q}\l\|\nabla  u_{1,p,\beta}^{\text{R}}\r\|_{q,\mu}^q+\frac{t^p\vartheta}{p}+\frac{t^q}{q}\l\|u_{1,p,\beta}^{\text{R}}\r\|_{q,\mu}^q -\frac{\zeta t^p}{p}+\frac{\eps t^p}{p}\\
	&\leq t^p \l(\frac{\lambda_{1,p,\beta}^{\text{R}}+\vartheta-\zeta+\eps}{p} \r)
	+t^q\l(\frac{\l\|\nabla  u_{1,p,\beta}^{\text{R}}\r\|_{q,\mu}^q+\l\|u_{1,p,\beta}^{\text{R}}\r\|_{q,\mu}^q}{q}\r).
	\end{split}
	\end{align}
	Due to $\zeta>\lambda_{1,p,\beta}^{\text{R}}+\vartheta$ and  $p<q$ one has from  \eqref{eq_5} for $\eps\in (0,\zeta-\lambda_{1,p,\beta}^{\text{R}}-\vartheta)$ that
	\begin{align*}
	\Pi^+\l(t u_{1,p,\beta}^{\text{R}}\r)<0 \quad\text{for all sufficiently small }t>0.
	\end{align*}
	Hence, $\Pi^+\l(u_1\r)<0=\Pi^+\l(0\r)$ and so $u_1\neq 0$.
	
	We have $(\Pi^+)'(u_1)=0$, that is,
	\begin{align}\label{17}
	\begin{split}
	& \into \left(|\nabla u_1|^{p-2}\nabla u_1+\mu(x)|\nabla u_1|^{q-2}\nabla u_1 \right)\cdot\nabla \ph \diff x\\
	&\quad +\into \left(\vartheta |u_1|^{p-2} u_1+\mu(x)|u_1|^{q-2} u_1 \right) \ph \diff x\\ 
	& =\into \psi^+_\zeta\l(x,u_1\r)\ph\diff x+\int_{\partial\Omega} \psi_\beta^+\l(x,u_1\r) \ph\diff \sigma
	\end{split}
	\end{align}
	for all $\ph \in \WH$. As done in the proof of Theorem \ref{thm2} we take $\ph=-u_1^-\in\WH$ and  $\ph=\l(u_1-\overline{u}\r)^+ \in \WH$ as test functions in \eqref{17} which gives us $0 \leq u_1 \leq \overline{u}$, see \eqref{corollary_A_super2}. Hence, by the definition of the truncations in \eqref{truncation_one2} we see that $u_1\in\WH\cap\Linf$ solves problem \eqref{problem3}. 
	
	In the same way we can show the existence of a nontrivial nonpositive solution $v_1\in\WH\cap \Linf$ by treating the functional $\Pi^-\colon \WH\to\R$ instead of $\Pi^+\colon\WH\to\R$.
\end{proof}

\begin{remark}
	In this section we decided to consider two different problems since in the proof of Theorem \ref{thm2} the use of the first Robin eigenfunction would have provided a condition of the form
	\begin{align}\label{condition_robin}
		\lambda_{1,p,\beta}^{\text{R}}+\vartheta< (\beta+\zeta) \l\|u_{1,p,\beta}^{\text{R}} \r\|_{p,\partial\Omega}^p,
	\end{align}
	which depends also on the boundary norm of the eigenfunction $u_{1,p,\beta}^{\text{R}}$. So the statement of Theorem \ref{thm2} still holds true when we replace the assumption $\zeta>\lambda_{1,p}^{\text{S}}$ by \eqref{condition_robin} where $u_{1,p,\beta}^{\text{R}}$ is the first normalized (that is,  $\|u_{1,p,\beta}^{\text{R}} \|_{p}=1$) eigenfunction associated to the first eigenvalue $	\lambda_{1,p,\beta}^{\text{R}}$ of the Robin eigenvalue problem. 
\end{remark}

%\section*{Acknowledgments}
%
%The authors wish to thank knowledgeable referees for their corrections and remarks. Furthermore, the authors wish to thank \'{A}ngel Crespo-Blanco for valuable comments and improvements. We acknowledge support by the Open Access Publication Fund of TU Berlin.


\begin{thebibliography}{30}

\bibitem{Bahrouni-Radulescu-Winkert-2020}
	A.~Bahrouni, V.D.~R\u{a}dulescu, P. Winkert,
	{\it Double phase problems with variable growth and convection for the Baouendi-Grushin operator}, 
	Z. Angew. Math. Phys. {\bf 71} (2020), no. 6, 183, 14 pp.

\bibitem{Barletta-Tornatore-2021} 
	G.~Barletta, E.~Tornatore,
	{\it Elliptic problems with convection terms in Orlicz spaces},
	J. Math. Anal. Appl. {\bf 495} (2021), no. 2, 124779, 28 pp.

\bibitem{Baroni-Colombo-Mingione-2015}
	P.~Baroni, M.~Colombo, G.~Mingione,
	{\it Harnack inequalities for double phase functionals},
	Nonlinear Anal. {\bf 121} (2015), 206--222.

\bibitem{Baroni-Colombo-Mingione-2016}
	P.~Baroni, M.~Colombo, G.~Mingione,
	{\it Non-autonomous functionals, borderline cases and related function classes},
	St. Petersburg Math. J. {\bf 27} (2016), 347--379.

\bibitem{Baroni-Colombo-Mingione-2018}
	P.~Baroni, M.~Colombo, G.~Mingione,
	{\it Regularity for general functionals with double phase},
	Calc. Var. Partial Differential Equations {\bf 57} (2018), no. 2, Art. 62, 48 pp.

\bibitem{Baroni-Kuusi-Mingione-2015}
	P.~Baroni, T.~Kuusi, G.~Mingione,
	{\it Borderline gradient continuity of minima},
	J. Fixed Point Theory Appl. {\bf 15} (2014), no. 2, 537--575.

\bibitem{Byun-Oh-2020}
	S.-S.~Byun, J.~Oh,
	{\it Regularity results for generalized double phase functionals},
	Anal. PDE {\bf 13} (2020), no. 5,  1269--1300.

\bibitem{Colasuonno-Squassina-2016}
	F.~Colasuonno, M.~Squassina,
	{\it Eigenvalues for double phase variational integrals},
	Ann. Mat. Pura Appl. (4) {\bf 195} (2016), no. 6, 1917--1959.

\bibitem{Colombo-Mingione-2015a}
	M.~Colombo, G.~Mingione,
	{Bounded minimisers of double phase variational integrals},
	Arch. Ration. Mech. Anal. {\bf 218} (2015), no. 1, 219--273.

\bibitem{Colombo-Mingione-2015b}
	M.~Colombo, G.~Mingione,
	{\it Regularity for double phase variational problems},
	Arch. Ration. Mech. Anal. {\bf 215} (2015), no. 2, 443--496.

\bibitem{Crespo-Blanco-Gasinski-Winkert-2021}
	\'{A}.~Crespo-Blanco, L.~Gasi\'nski, P.~Harjulehto, P.~Winkert,
	{\it A new class of double phase variable exponent problems: Existence and uniqueness},
	preprint 2021, arXiv: 2103.08928.

\bibitem{Farkas-Winkert-2020}
	C.~Farkas, P.~Winkert,
	{\it An existence result for singular Finsler double phase problems},
	J. Differential Equations {\bf 286} (2021), 455–473.

\bibitem{Farkas-Fiscella-Winkert-2021}
	C.~Farkas, A.~Fiscella, P.~Winkert,
	{\it Singular Finsler double phase problems with nonlinear boundary condition},
	preprint 2021, arXiv: 2102.05467.

\bibitem{Gasinski-Papageorgiou-2019}
	L.~Gasi\'nski, N.S.~Papageorgiou,
	{\it Constant sign and nodal solutions for superlinear double phase problems},
	Adv. Calc. Var., https://doi.org/10.1515/acv-2019-0040.

\bibitem{Gasinski-Winkert-2020a}
	L.~Gasi\'{n}ski, P.~Winkert,
	{\it Constant sign solutions for double phase problems with superlinear nonlinearity},
	Nonlinear Anal. {\bf 195} (2020), 111739, 9 pp.

\bibitem{Gasinski-Winkert-2020b}
	L.~Gasi\'nski, P.~Winkert,
	{\it Existence and uniqueness results for double phase problems with convection term},
	J. Differential Equations {\bf 268} (2020), no. 8, 4183--4193.

\bibitem{Gasinski-Winkert-2021}
	L.~Gasi\'nski, P.~Winkert,
	{\it Sign changing solution for a double phase problem with nonlinear boundary condition via the Nehari manifold},
	J. Differential Equations {\bf 274} (2021), 1037--1066.
        
\bibitem{Le-2006}
	A.~L{\^e},
	{\it Eigenvalue problems for the {$p$}-{L}aplacian},
	Nonlinear Anal. {\bf 64} (2006), no. 5, 1057--1099.

\bibitem{Liu-Dai-2018}
	W.~Liu, G.~Dai,
	{\it Existence and multiplicity results for double phase problem},
	J. Differential Equations {\bf 265} (2018), no. 9, 4311--4334.

\bibitem{Liu-Dai-2020}
	W.~Liu, G.~Dai,
	{\it Multiplicity results for double phase problems in {$\mathbb{R}^N$}},
	J. Math. Phys. {\bf 61} (2020), no. 9,  091508, 20 pp.

\bibitem{Marcellini-1991}
	P.~Marcellini,
	{\it Regularity and existence of solutions of elliptic equations with {$p,q$}-growth conditions},
	J. Differential Equations {\bf 90} (1991), no. 1, 1--30.

\bibitem{Marcellini-1989}
	P.~Marcellini,
	{\it The stored-energy for some discontinuous deformations in nonlinear elasticity}, 
	in ``Partial differential equations and the calculus of variations, {V}ol. {II}'', vol. 2, 767--786, Birkh\"{a}user Boston, Boston, 1989.

\bibitem{Marino-Winkert-2020}
	G.~Marino, P.~Winkert,
	{\it Existence and uniqueness of elliptic systems with double phase operators and convection terms},
	J. Math. Anal. Appl. {\bf 492} (2020), 124423, 13 pp.

\bibitem{Mingione-Radulescu-2021}
	G.~Mingione, V.D.~R\u{a}dulescu,
	{\it Recent developments in problems with nonstandard growth and nonuniform ellipticity},
	J. Math. Anal. Appl. {\bf 501} (2021), no. 1, 125197, 41 pp.
	
\bibitem{Ok-2018}
	J.~Ok,
	{\it Partial regularity for general systems of double phase type with continuous coefficients},
	Nonlinear Anal. {\bf 177} (2018), 673--698.

\bibitem{Ok-2020}
	J.~Ok,
	{\it Regularity for double phase problems under additional integrability assumptions},
	Nonlinear Anal. {\bf 194} (2020), 111408.

\bibitem{Papageorgiou-Radulescu-Repovs-2019}
	N.S.~Papageorgiou, V.D.~R\u{a}dulescu, D.D.~Repov\v{s},
	{\it Double-phase problems and a discontinuity property of the spectrum},
	Proc. Amer. Math. Soc. {\bf 147} (2019), no. 7, 2899--2910.

\bibitem{Papageorgiou-Radulescu-Repovs-2020d}
	N.S.~Papageorgiou, V.D.~R\u{a}dulescu, D.D.~Repov\v{s},
	{\it Existence and multiplicity of solutions for double-phase {R}obin problems},
	Bull. Lond. Math. Soc. {\bf 52} (2020), no. 3, 546--560.

\bibitem{Papageorgiou-Vetro-Vetro-2020}
	N.S.~Papageorgiou, C.~Vetro, F.~Vetro,
	{\it Solutions for parametric double phase Robin problems},
	Asymptot. Anal. {\bf 121} (2021), no. 2, 159--170.

\bibitem{Papageorgiou-Winkert-2018}
	N.S.~Papageorgiou, P.~Winkert,
	``Applied Nonlinear Functional Analysis. An Introduction'',
	De Gruyter, Berlin, 2018.

\bibitem{Perera-Squassina-2019}
	K.~Perera, M.~Squassina,
	{\it Existence results for double-phase problems via Morse theory},
	Commun. Contemp. Math. {\bf 20} (2018), no. 2, 1750023, 14 pp.

\bibitem{Radulescu-2019}
	V.D.~R\u{a}dulescu,
	{\it Isotropic and anistropic double-phase problems: old and new},
	Opuscula Math. {\bf 39} (2019), no. 2, 259--279.

\bibitem{Ragusa-Tachikawa-2020}
	M.A.~Ragusa, A.~Tachikawa,
	{\it Regularity for minimizers for functionals of double phase with variable exponents},
	Adv. Nonlinear Anal. {\bf 9} (2020), no. 1, 710--728.

\bibitem{Zeng-Bai-Gasinski-Winkert-2020b}
	S.D.~Zeng, Y.R.~Bai, L.~Gasi\'{n}ski, P.~Winkert,
	{Convergence analysis for double phase obstacle problems with multivalued convection term},
	Adv. Nonlinear Anal. {\bf 10} (2021), no. 1,  659--672.

\bibitem{Zeng-Bai-Gasinski-Winkert-2020}
	S.D.~Zeng, Y.R.~Bai, L.~Gasi\'{n}ski, P.~Winkert,
	{\it Existence results for double phase implicit obstacle problems involving multivalued operators},
	Calc. Var. Partial Differential Equations {\bf 59} (2020), no. 5, 176.

\bibitem{Zeng-Gasinski-Winkert-Bai-2020}
	S.D.~Zeng, L.~Gasi\'{n}ski, P.~Winkert, Y.R.~Bai,
	{\it Existence of solutions for double phase obstacle problems with multivalued convection term},
	J. Math. Anal. Appl. {\bf 501} (2021), no. 1, 123997, 12 pp.

\bibitem{Zhikov-1986}
	V.V.~Zhikov,
	{\it Averaging of functionals of the calculus of variations and elasticity theory},
	Izv. Akad. Nauk SSSR Ser. Mat. {\bf 50} (1986), no. 4, 675--710.

\bibitem{Zhikov-Kozlov-Oleinik-1994}
	V.V.~Zhikov, S.~M.~Kozlov, O.~A.~Ole\u{\i}nik,
	``Homogenization of Differential Operators and Integral Functionals'',
	Springer-Verlag, Berlin, 1994.

\end{thebibliography}
\end{document}